\newtheorem{theorem}{Theorem}
\newtheorem{corollary}[theorem]{Corollary}
\newtheorem{definition}[theorem]{Definition}
\newtheorem{example}[theorem]{Example}
\newtheorem{proposition}[theorem]{Proposition}
\newtheorem{remark}[theorem]{Remark}
\newenvironment{proof}[1][Proof]{\noindent\textbf{#1.} }{\ \rule{0.5em}{0.5em}}
\numberwithin{theorem}{section}
\numberwithin{equation}{section}
\begin{document}

\title{The local description of the Ricci and Bianchi identities for an $h$%
-normal $N$-linear connection on the dual $1$-jet space $J^{1\ast }(\mathcal{%
T},M)$}
\author{Alexandru Oan\u{a} and Mircea Neagu}
\date{}
\maketitle

\begin{abstract}
In this paper we describe the local Ricci and Bianchi identities for an $h$%
-normal $N$-linear connection $D\Gamma (N)$ on the dual $1$-jet space $%
J^{1\ast }(\mathcal{T},M)$. To reach this aim, we firstly give the
expressions of the local distinguished (d-) adapted components of torsion
and curvature tensors produced by $D\Gamma (N)$, and then we analyze their
attached local Ricci identities. The derived deflection d-tensor identities
are also presented. Finally, we expose the local expressions of the Bianchi
identities (in the particular case of an $h$-normal $N$-linear connection of
Cartan type), which geometrically connect the local torsion and curvature
d-tensors of the linear connection $D\Gamma (N)$.
\end{abstract}

\textit{2000 Mathematics Subject Classification:} 53B40, 53C60, 53C07.

\textit{Key words and phrases:} dual $1$-jet spaces, nonlinear connections, $%
h$-normal $N$-linear connections of Cartan type, Ricci and Bianchi
identities.

\section{Introduction}

According to Olver's opinion \cite{Olver}, we consider that the $1$-jet
spaces and their duals are natural houses for the study of classical and
quantum field theories. For such a reason, the differential geometry of $1$%
-jet spaces was intensively studied, in a contravariant approach, by a lot
of authors: Saunders \cite{Saun}, Asanov \cite{Asan}, Neagu and Udri\c{s}te
(see \cite{Neag1}, \cite{Neag+Udri}, \cite{Neag+Udri+Oana}), and many others.

In the last decades, numerous physicists and geometers were preoccupied by
the development of that so-called the \textit{covariant Hamiltonian geometry
of physical fields}, which is a multi-parameter, or multi-time, extension of
the classical Hamiltonian formulation from Mechanics. In such a perspective,
we point out that the covariant Hamiltonian geometry of physical fields
appears in the literature of specialty in three distinct variants: \textbf{%
(1)} $\blacktriangleright $ the \textit{multisymplectic geometry} $-$
developed by Gotay, Isenberg, Marsden, Montgomery and their co-workers (see 
\cite{Gota+Isen+Mars}, \cite{Gota+Isen+Mars+Mont}) on a finite-dimensional
multisymplectic phase space; \textbf{(2)} $\blacktriangleright $ the \textit{%
polysymplectic geometry} $-$ elaborated by Giachetta, Mangiarotti and
Sardanashvily (see \cite{Giac+Mang+Sard1}, \cite{Giac+Mang+Sard2}), which
emphasizes the relations between the equations of first order Lagrangian
field theory on fiber bundles and the covariant Hamilton equations on a
finite-dimensional polysymplectic phase space; \textbf{(3)} $%
\blacktriangleright $ the \textit{De Donder-Weyl Hamiltonian geometry} $-$
studied by Kanatchikov (see \cite{Kana1}, \cite{Kana2}, \cite{Kana3}) as
opposed to the conventional field-theoretical Hamiltonian formalism, which
requires the space + time decomposition and leads to the picture of a field
as a mechanical system with infinitely degrees of freedom.

From a geometrical point of view, following the ideas initially stated by
Asanov \cite{Asan}, a multi-time Lagrange contravariant geometry on $1$-jet
spaces (in the sense of d-linear connections, d-torsions and d-curvatures)
was recently developed by Neagu and Udri\c{s}te in \cite{Neag1}, \cite%
{Neag+Udri} and \cite{Neag+Udri+Oana}. This $1$-jet geometrical theory is a
natural multi-time extension of the classical Lagrangian geometry on tangent
bundles, initiated and developed by Miron and Anastasiei \cite{Miro+Anas}.

On the other hand, suggested by the field theoretical extension of the basic
structures of classical Analytical Mechanics within the framework of the De
Donder-Weyl covariant Hamiltonian formulation, the geometrical studies of
Miron \cite{Miro}, Atanasiu \cite{Atan+Klep}, \cite{Atan1} and others led to
the development of the Hamilton geometry on cotangent bundles, which is
synthesized in the book \cite{Miro+Hrim+Shim+Saba}. Note that the
Miron-Atanasiu Hamiltonian geometrical ideas from cotangent bundles
represent the point start for the development of the jet covariant
Riemann-Hamilton geometry depending on polymomenta, which is presented in
the Atanasiu-Neagu papers \cite{Atan-Neag0} and \cite{Atan+Neag1}. In this
paper we are going on the jet multi-time Hamiltonian geometrical studies
from \cite{Atan-Neag0} and \cite{Atan+Neag1}.

\section{Components of $N$-linear connections on dual $1$-jet bundle $%
J^{1\ast }(\mathcal{T},M)$}

Let $\mathcal{T}$ and $M$ be a \textit{temporal} (resp. \textit{spatial})
real, smooth manifold of dimension $m$ (resp. $n$), whose coordinates are $%
(t^{a})_{a=\overline{1,m}}$, respectively $(x^{i})_{i=\overline{1,n}}$. Note
that, throughout this paper, the indices $a$, $b$, $c$, $...$ run from 1 to $%
m$, while the indices $i,$ $j,$ $k,$ $...$ run from 1 to $n$. The Einstein
convention of summation is also adopted all over this work.

Let $J^{1\ast }(\mathcal{T},M)$ be the dual $1$-jet fibre bundle, whose
coordinates $(t^{a},x^{i},p_{i}^{a})$ are induced from $\mathcal{T}$ and $M$%
. The coordinate transformations from the product manifold $\mathcal{T}%
\times $ $M$ produce on $J^{1\ast }(\mathcal{T},M)$ the following coordinate
transformations:%
\begin{equation*}
\begin{array}{ccc}
\widetilde{t}^{a}=\widetilde{t}^{a}\left( t^{b}\right) , & \widetilde{x}^{i}=%
\widetilde{x}^{i}\left( x^{j}\right) , & \widetilde{p}_{i}^{a}=\dfrac{%
\partial x^{j}}{\partial \widetilde{x}^{i}}\dfrac{\partial \widetilde{t}^{a}%
}{\partial t^{b}}p_{j}^{b},%
\end{array}%
\end{equation*}%
where $\det \left( \partial \widetilde{t}^{a}/\partial t^{b}\right) \neq 0$
and $\det \left( \partial \widetilde{x}^{i}/\partial x^{j}\right) \neq 0.$

\begin{definition}
A pair of local functions on $E^{\ast }=J^{1\ast }(\mathcal{T},M),$ denoted
by 
\begin{equation*}
N=\left( \underset{1}{N}\overset{\left( a\right) }{_{\left( i\right) b}},\ 
\underset{2}{N}\overset{\left( a\right) }{_{\left( i\right) j}}\right) ,
\end{equation*}%
whose local components obey the transformation rules%
\begin{equation*}
\underset{1}{\widetilde{N}}\overset{\left( b\right) }{_{\left( j\right) c}}%
\dfrac{\delta \widetilde{t}^{c}}{\delta t^{a}}=\underset{1}{N}\overset{%
\left( c\right) }{_{\left( k\right) a}}\dfrac{\delta \widetilde{t}^{b}}{%
\delta t^{c}}\dfrac{\partial x^{k}}{\partial \widetilde{x}^{j}}-\dfrac{%
\partial \widetilde{p}_{j}^{b}}{\partial t^{a}},
\end{equation*}%
\begin{equation*}
\underset{2}{\widetilde{N}}\overset{\left( b\right) }{_{\left( j\right) k}}%
\dfrac{\partial \widetilde{x}^{k}}{\partial x^{i}}=\underset{2}{N}\overset{%
\left( c\right) }{_{\left( k\right) i}}\dfrac{\delta \widetilde{t}^{b}}{%
\delta t^{c}}\dfrac{\partial x^{k}}{\partial \widetilde{x}^{j}}-\dfrac{%
\partial \widetilde{p}_{j}^{b}}{\partial x^{i}},
\end{equation*}%
is called a \textbf{nonlinear connection} on $E^{\ast }$. The components $%
\underset{1}{N}\underset{}{\overset{\left( a\right) }{_{\left( i\right) b}}}$
(resp. $\underset{2}{N}\underset{}{\overset{\left( a\right) }{_{\left(
i\right) j}}}$) are called the\ \textbf{temporal} (resp. \textbf{spatial}) 
\textbf{components} of $N$.
\end{definition}

\begin{example}
Let $h_{ab}\left( t^{f}\right) $ (resp. $\varphi _{ij}\left( x^{k}\right) $)
be a semi-Riemannian metric on the temporal manifold $\mathcal{T}$ (resp.
spatial manifold $M$). Taking into account the local transformation rules of
the Christoffel symbols $\chi _{bc}^{a}\left( t\right) $ (resp. $\Gamma
_{ij}^{k}\left( x\right) $) of the metrics $h_{ab}\left( t\right) $ (resp. $%
\varphi _{ij}\left( x\right) $), then the pair of local functions%
\begin{equation*}
N_{0}=\left( \underset{1}{\overset{0}{N}}\overset{\left( a\right) }{_{\left(
i\right) b}},\ \underset{2}{\overset{0}{N}}\overset{\left( a\right) }{%
_{\left( i\right) j}}\right) ,
\end{equation*}%
where%
\begin{equation*}
\begin{array}{cc}
\underset{1}{\overset{0}{N}}\overset{\left( a\right) }{_{\left( i\right) b}}%
=\chi _{bc}^{a}p_{i}^{c},\quad & \underset{2}{\overset{0}{N}}\overset{\left(
a\right) }{_{\left( i\right) j}}\underset{}{}=-\Gamma _{ij}^{k}p_{k}^{a},%
\end{array}%
\end{equation*}%
represents a nonlinear connection on $E^{\ast }$. This is called the \textbf{%
canonical nonli-}\newline
\textbf{near connection attached to the metrics }$h_{ab}(t)$ \textbf{and} $%
\varphi _{ij}(x)$.
\end{example}

In what follows, we fix a nonlinear connection on $E^{\ast }$, and we
consider the \textit{adapted bases} of the nonlinear connection $N$, defined
by%
\begin{equation}
\left\{ \frac{\delta }{\delta t^{a}},\frac{\delta }{\delta x^{i}},\frac{%
\partial }{\partial p_{i}^{a}}\right\} \subset \mathcal{X}\left( E^{\ast
}\right) ,\quad \left\{ dt^{a},dx^{i},\delta p_{i}^{a}\right\} \subset 
\mathcal{X}^{\ast }\left( E^{\ast }\right) ,  \label{bz}
\end{equation}%
where%
\begin{equation*}
\begin{array}{l}
\dfrac{\delta }{\delta t^{a}}=\dfrac{\partial }{\partial t^{a}}-\underset{1}{%
N}\overset{\left( b\right) }{_{\left( j\right) a}}\dfrac{\partial }{\partial
p_{j}^{b}},\medskip \\ 
\dfrac{\delta }{\delta x^{i}}=\dfrac{\partial }{\partial x^{i}}-\underset{2}{%
N}\overset{\left( b\right) }{_{\left( j\right) i}}\dfrac{\partial }{\partial
p_{j}^{b}},\medskip \\ 
\delta p_{i}^{a}=dp_{i}^{a}+\underset{1}{N}\overset{\left( a\right) }{%
_{\left( i\right) b}}dt^{b}+\underset{2}{N}\overset{\left( a\right) }{%
_{\left( i\right) j}}dx^{j}.%
\end{array}%
\end{equation*}%
It is important to note that the transformation rules of the elements of the
adapted bases (\ref{bz}) are tensorial ones:%
\begin{equation}
\begin{array}{lll}
\dfrac{\delta }{\delta t^{a}}=\dfrac{\partial \tilde{t}^{b}}{\partial t^{a}}%
\dfrac{\delta }{\delta \tilde{t}^{b}},\medskip & \dfrac{\delta }{\delta x^{i}%
}=\dfrac{\partial \tilde{x}^{j}}{\partial x^{i}}\dfrac{\delta }{\delta 
\tilde{x}^{j}}, & \dfrac{\partial }{\partial p_{i}^{a}}=\dfrac{\partial 
\tilde{t}^{b}}{\partial t^{a}}\dfrac{\partial x^{i}}{\partial \tilde{x}^{j}}%
\dfrac{\partial }{\partial \tilde{p}_{j}^{b}}, \\ 
dt^{a}=\dfrac{\partial t^{a}}{\partial \tilde{t}^{b}}d\tilde{t}^{b}, & 
dx^{i}=\dfrac{\partial x^{i}}{\partial \tilde{x}^{j}}d\tilde{x}^{j}, & 
\delta p_{i}^{a}=\dfrac{\partial t^{a}}{\partial \tilde{t}^{b}}\dfrac{%
\partial \tilde{x}^{j}}{\partial x^{i}}\delta \tilde{p}_{j}^{b}.%
\end{array}
\label{schbz}
\end{equation}

\begin{remark}
The simple tensorial transformation rules (\ref{schbz}) of the adapted bases
(\ref{bz}) determined us to describe in what follows all geometrical objects
on the dual $1$-jet space $J^{1\ast }(\mathcal{T},M)$ in adapted local
components.
\end{remark}

In order to develop the geometrical theory of $N$-linear connections on the
dual $1$-jet space $E^{\ast }$, we need the following result:

\begin{proposition}
\emph{(i)} The Lie algebra $\mathcal{X}\left( E^{\ast }\right) $ of vector
fields decomposes as%
\begin{equation*}
\mathcal{X}\left( E^{\ast }\right) =\mathcal{X}\left( \mathcal{H}_{\mathcal{T%
}}\right) \oplus \mathcal{X}\left( \mathcal{H}_{M}\right) \oplus \mathcal{X}%
\left( \mathcal{V}\right) ,
\end{equation*}%
where%
\begin{equation*}
\mathcal{X}\left( \mathcal{H}_{\mathcal{T}}\right) {\scriptsize =}\text{%
\emph{Span}}\left\{ \dfrac{\delta }{\delta t^{a}}\right\} {\scriptsize %
,\quad }\mathcal{X}\left( \mathcal{H}_{M}\right) {\scriptsize =}\text{\emph{%
Span}}\left\{ \dfrac{\delta }{\delta x^{i}}\right\} {\scriptsize ,\quad }%
\mathcal{X}\left( \mathcal{V}\right) {\scriptsize =}\text{\emph{Span}}%
\left\{ \dfrac{\partial }{\partial p_{i}^{a}}\right\} {\scriptsize .}
\end{equation*}%
\emph{(ii)} The Lie algebra $\mathcal{X}^{\ast }\left( E^{\ast }\right) $ of
covector fields decomposes as%
\begin{equation*}
\mathcal{X}^{\ast }\left( E^{\ast }\right) =\mathcal{X}^{\ast }\left( 
\mathcal{H}_{\mathcal{T}}\right) \oplus \mathcal{X}^{\ast }\left( \mathcal{H}%
_{M}\right) \oplus \mathcal{X}^{\ast }\left( \mathcal{V}\right) ,
\end{equation*}%
where%
\begin{equation*}
\mathcal{X}^{\ast }\left( \mathcal{H}_{\mathcal{T}}\right) {\scriptsize =}%
\text{\emph{Span}}\left\{ dt^{a}\right\} {\scriptsize ,\quad }\mathcal{X}%
^{\ast }\left( \mathcal{H}_{M}\right) {\scriptsize =}\text{\emph{Span}}%
\left\{ dx^{i}\right\} {\scriptsize ,\quad }\mathcal{X}^{\ast }\left( 
\mathcal{V}\right) {\scriptsize =}\text{\emph{Span}}\left\{ \delta
p_{i}^{a}\right\} {\scriptsize .}
\end{equation*}
\end{proposition}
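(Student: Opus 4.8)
The plan is to prove both decompositions by establishing that the adapted families in (\ref{bz}) are in fact bases of the modules $\mathcal{X}(E^{\ast})$ and $\mathcal{X}^{\ast}(E^{\ast})$, and that each of the three summands is globally well defined thanks to the tensorial rules (\ref{schbz}).

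First I would argue pointwise. At an arbitrary point $u\in E^{\ast}$ the natural coordinate frame $\{\partial/\partial t^{a},\partial/\partial x^{i},\partial/\partial p_{i}^{a}\}$ is a basis of the tangent space $T_{u}E^{\ast}$, whose dimension is $m+n+mn$. From the defining formulas for $\delta/\delta t^{a}$ and $\delta/\delta x^{i}$ one reads off that the change from this natural frame to the adapted frame $\{\delta/\delta t^{a},\delta/\delta x^{i},\partial/\partial p_{i}^{a}\}$ is given by a block lower-triangular matrix whose diagonal blocks are identity matrices and whose only nonzero off-diagonal entries are $-\underset{1}{N}\overset{\left( b\right) }{_{\left( j\right) a}}$ and $-\underset{2}{N}\overset{\left( b\right) }{_{\left( j\right) i}}$. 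Such a matrix is unipotent, hence invertible, so the adapted frame is again a basis of $T_{u}E^{\ast}$. Counting dimensions ($m$, $n$ and $mn$, summing to $m+n+mn$), the three spans are linearly independent and together exhaust $T_{u}E^{\ast}$, which yields the fibrewise splitting $T_{u}E^{\ast}=\operatorname{Span}\{\delta/\delta t^{a}\}\oplus\operatorname{Span}\{\delta/\delta x^{i}\}\oplus\operatorname{Span}\{\partial/\partial p_{i}^{a}\}$.

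To lift this to the level of modules and to see that the summands are intrinsic, I would invoke the first line of (\ref{schbz}): under a change of induced coordinates the vectors $\delta/\delta t^{a}$ transform only among themselves, and likewise the families $\delta/\delta x^{i}$ and $\partial/\partial p_{i}^{a}$ each transform within themselves. Hence the three spans do not depend on the chart, so $\mathcal{X}(\mathcal{H}_{\mathcal{T}})$, $\mathcal{X}(\mathcal{H}_{M})$ and $\mathcal{X}(\mathcal{V})$ are well-defined submodules, and the fibrewise decomposition assembles into the asserted decomposition of $\mathcal{X}(E^{\ast})$. This proves (i).

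For (ii) I would first check, by substituting the definitions of the adapted coframe, that $\{dt^{a},dx^{i},\delta p_{i}^{a}\}$ is exactly the dual basis of the adapted frame; that is, the pairings satisfy $\langle dt^{a},\delta/\delta t^{b}\rangle=\delta_{b}^{a}$, $\langle dx^{i},\delta/\delta x^{j}\rangle=\delta_{j}^{i}$ and $\langle \delta p_{i}^{a},\partial/\partial p_{j}^{b}\rangle=\delta_{b}^{a}\delta_{i}^{j}$, all the remaining mixed pairings being zero. Being a dual basis, it is a basis of $T_{u}^{\ast}E^{\ast}$, and the second line of (\ref{schbz}) shows once more that each of the three coframe families transforms within itself, so the corresponding cospans are chart-independent. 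The same dimension count and globalization then give (ii). The only genuinely delicate point in all of this is the globalization step, which rests entirely on the closed form of the transformation rules (\ref{schbz}); the underlying linear algebra, namely the unipotence of the transition matrix and the duality relations, is a routine verification.
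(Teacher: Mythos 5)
Your proof is correct. Note that the paper itself supplies no proof of this proposition: it is stated as a standard preliminary fact, with the adapted bases (\ref{bz}) and their tensorial transformation rules (\ref{schbz}) already established beforehand, and the three projections $h_{\mathcal{T}}$, $h_{M}$, $v$ introduced immediately afterwards. Your argument --- the unipotent (block--triangular, identity--diagonal) change from the natural frame to the adapted frame, the dimension count $m+n+mn$, the verification that $\left\{ dt^{a},dx^{i},\delta p_{i}^{a}\right\}$ is the dual coframe, and the chart--independence of the three spans via (\ref{schbz}) --- is exactly the routine verification that the paper leaves implicit, so it fills the omission rather than diverging from the paper's approach.
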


Let us consider that $h_{\mathcal{T}}$, $h_{M}$ (horizontal) and $v$
(vertical) are the canonical projections of the above decompositions. In
this context, we introduce the following geometrical concept:

\begin{definition}
A linear connection $D:\mathcal{X}\left( E^{\ast }\right) \times \mathcal{X}%
\left( E^{\ast }\right) \rightarrow \mathcal{X}\left( E^{\ast }\right) $ is
called \textit{an }$N$\textbf{-linear connection} on $E^{\ast }$ if and only
if $Dh_{\mathcal{T}}=0,Dh_{M}=0$ and $Dv=0$.
\end{definition}

It is obvious that the local description of the $N$-linear connection $D$ on 
$E^{\ast }$ is accomplished by \textit{nine} unique adapted components%
\begin{equation}
\begin{array}{c}
D\Gamma \left( N\right) =\left( A_{bc}^{a},\text{ }A_{jc}^{i},\text{ }%
-A_{\left( i\right) \left( b\right) c}^{\left( a\right) \left( j\right) },%
\text{ }H_{bk}^{a},\text{ }H_{jk}^{i},\text{ }-H_{\left( i\right) \left(
b\right) k}^{\left( a\right) \left( j\right) },\right. \medskip \\ 
\left. C_{b\left( c\right) }^{a\left( k\right) },\text{ }C_{j\left( c\right)
}^{i\left( k\right) },\text{ }-C_{\left( i\right) \left( b\right) \left(
c\right) }^{\left( a\right) \left( j\right) \left( k\right) }\right) ,%
\end{array}
\label{coef_N-cl}
\end{equation}%
which are locally defined by the relations:%
\begin{eqnarray*}
{\scriptsize D}_{\dfrac{\delta }{\delta t^{c}}}\frac{\delta }{\delta t^{b}} &%
{\scriptsize =}&{\scriptsize A}_{bc}^{a}\frac{\delta }{\delta t^{a}}%
{\scriptsize ,}\text{ }{\scriptsize D}_{\dfrac{\delta }{\delta t^{c}}}\frac{%
\delta }{\delta x^{j}}{\scriptsize =A}_{jc}^{i}\frac{\delta }{\delta x^{i}}%
{\scriptsize ,}\text{ }{\scriptsize D}_{\dfrac{\delta }{\delta t^{c}}}\frac{%
\partial }{\partial p_{j}^{b}}{\scriptsize =-A}_{\left( i\right) \left(
b\right) c}^{\left( a\right) \left( j\right) }\frac{\partial }{\partial
p_{i}^{a}}, \\
{\scriptsize D}_{\dfrac{\delta }{\delta x^{k}}}\frac{\delta }{\delta t^{b}} &%
{\scriptsize =}&{\scriptsize H}_{bk}^{a}\frac{\delta }{\delta t^{a}}%
{\scriptsize ,}\text{ }{\scriptsize D}_{\dfrac{\delta }{\delta x^{k}}}\frac{%
\delta }{\delta x^{j}}{\scriptsize =H}_{jk}^{i}\frac{\delta }{\delta x^{i}}%
{\scriptsize ,}\text{ }{\scriptsize D}_{\dfrac{\delta }{\delta x^{k}}}\frac{%
\partial }{\partial p_{j}^{b}}{\scriptsize =-H}_{\left( i\right) \left(
b\right) k}^{\left( a\right) \left( j\right) }\frac{\partial }{\partial
p_{i}^{a}}, \\
{\scriptsize D}_{\dfrac{\partial }{\partial p_{k}^{c}}}\frac{\delta }{\delta
t^{b}} &{\scriptsize =}&{\scriptsize C}_{b\left( c\right) }^{a\left(
k\right) }\frac{\delta }{\delta t^{a}}{\scriptsize ,}\text{ }{\scriptsize D}%
_{\dfrac{\partial }{\partial p_{k}^{c}}}\frac{\delta }{\delta x^{j}}%
{\scriptsize =C}_{j\left( c\right) }^{i\left( k\right) }\frac{\delta }{%
\delta x^{i}}{\scriptsize ,}\text{ }{\scriptsize D}_{\dfrac{\partial }{%
\partial p_{k}^{c}}}\frac{\partial }{\partial p_{j}^{b}}{\scriptsize =-C}%
_{\left( i\right) \left( b\right) \left( c\right) }^{\left( a\right) \left(
j\right) \left( k\right) }\frac{\partial }{\partial p_{i}^{a}}.
\end{eqnarray*}

\begin{example}
Let $N_{0}=\left( \underset{1}{\overset{0}{N}}\overset{\left( a\right) }{%
_{\left( i\right) b}},\ \underset{2}{\overset{0}{N}}\overset{\left( a\right) 
}{_{\left( i\right) j}}\right) $ be the canonical nonlinear connection
produced by the semi-Riemannian metrics $(h_{ab},\varphi _{ij})$. Taking
into account the transformation rules of the Christoffel symbols $\chi
_{bc}^{a}$ and $\Gamma _{jk}^{i}$, by local computations, we can show that
the local components%
\begin{equation*}
B\Gamma \left( N_{0}\right) =\left( \chi _{bc}^{a},\text{ }0,\text{ }%
-A_{\left( i\right) \left( b\right) c}^{\left( a\right) \left( j\right) },%
\text{ }0,\text{ }\Gamma _{jk}^{i},\text{ }-H_{\left( i\right) \left(
b\right) k}^{\left( a\right) \left( j\right) },\text{ }0,\text{ }0,\text{ }%
0\right)
\end{equation*}%
where%
\begin{equation*}
A_{\left( i\right) \left( b\right) c}^{\left( a\right) \left( j\right)
}=-\delta _{i}^{j}\chi _{bc}^{a},\quad H_{\left( i\right) \left( b\right)
k}^{\left( a\right) \left( j\right) }=\delta _{b}^{a}\Gamma _{ik}^{j},
\end{equation*}%
verify the transformation rules of the components of an $N$-linear
connection (for more details, see \emph{\cite{Atan+Neag1}}). Consequently, $%
B\Gamma \left( N_{0}\right) $ is an $N_{0}$-linear connection on $E^{\ast }$%
, which is called the \textbf{Berwald connection}\textit{\ of the metric
pair }$\left( h_{ab},\varphi _{ij}\right) .$
\end{example}

Now, let $D\Gamma (N)$ be an $N$-linear connection on $E^{\ast }$, locally
defined by (\ref{coef_N-cl}). The linear connection $D\Gamma (N)$ induces a
linear connection on the set of d-tensors on the dual $1$-jet fibre bundle $%
E^{\ast }=J^{1\ast }\left( \mathcal{T},M\right) ,$ in a natural way. Thus,
starting with a d-vector field $X$ and a d-tensor field $T$, locally
expressed by%
\begin{equation*}
\begin{array}{lll}
X & = & X^{a}\dfrac{\delta }{\delta t^{a}}+X^{i}\dfrac{\delta }{\delta x^{i}}%
+X_{\left( i\right) }^{\left( a\right) }\dfrac{\partial }{\partial p_{i}^{a}}%
,\medskip \\ 
T & = & T_{cj\left( b\right) \left( l\right) ...}^{ai\left( k\right) \left(
d\right) ...}\dfrac{\delta }{\delta t^{a}}\otimes \dfrac{\delta }{\delta
x^{i}}\otimes \dfrac{\partial }{\partial p_{l}^{d}}\otimes dt^{c}\otimes
dx^{j}\otimes \delta p_{k}^{b}\otimes ...,%
\end{array}%
\end{equation*}%
we can define the covariant derivative%
\begin{equation*}
\begin{array}{l}
D_{X}T=X^{g}D_{\dfrac{\delta }{\delta t^{g}}}T+X^{s}D_{\dfrac{\delta }{%
\delta x^{s}}}T+X_{\left( s\right) }^{\left( g\right) }D_{\dfrac{\partial }{%
\partial p_{s}^{g}}}T=\medskip \\ 
=\left\{ X^{g}T_{cj\left( b\right) \left( l\right) .../g}^{ai\left( k\right)
\left( d\right) ...}+X^{s}T_{cj\left( b\right) \left( l\right)
...|s}^{ai\left( k\right) \left( d\right) ...}+\right. \medskip \\ 
\left. +X_{\left( s\right) }^{\left( g\right) }T_{cj\left( b\right) \left(
l\right) ...}^{ai\left( k\right) \left( d\right) ...}\mid _{\left( g\right)
}^{\left( s\right) }\right\} \dfrac{\delta }{\delta t^{a}}\otimes \dfrac{%
\delta }{\delta x^{i}}\otimes \dfrac{\partial }{\partial p_{l}^{d}}\otimes
dt^{c}\otimes dx^{j}\otimes \delta p_{k}^{b}\otimes ...,%
\end{array}%
\end{equation*}%
where

\begin{itemize}
\item the $\mathcal{T}$\textit{-horizontal covariant derivative }of $D\Gamma
(N)$:%
\begin{equation*}
\left( h_{\mathcal{T}}\right) \text{ }\left\{ 
\begin{array}{l}
T_{cj\left( b\right) \left( l\right) .../g}^{ai\left( k\right) \left(
d\right) ...}=\dfrac{\delta T_{cj\left( b\right) \left( l\right)
...}^{ai\left( k\right) \left( d\right) ...}}{\delta t^{g}}+T_{cj\left(
b\right) \left( l\right) ...}^{fi\left( k\right) \left( d\right)
...}A_{fg}^{a}+\medskip \\ 
+T_{cj\left( b\right) \left( l\right) ...}^{ar\left( k\right) \left(
d\right) ...}A_{rg}^{i}+T_{cj\left( f\right) \left( l\right) ...}^{ai\left(
r\right) \left( d\right) ...}A_{\left( r\right) \left( b\right) g}^{\left(
f\right) \left( k\right) }+...-\medskip \\ 
-T_{fj\left( b\right) \left( l\right) ...}^{ai\left( k\right) \left(
d\right) ...}A_{cg}^{f}-T_{cr\left( b\right) \left( l\right) ...}^{ai\left(
k\right) \left( d\right) ...}A_{jg}^{r}-T_{cj\left( b\right) \left( r\right)
...}^{ai\left( k\right) \left( f\right) ...}A_{\left( l\right) \left(
f\right) g}^{\left( d\right) \left( r\right) }-...,%
\end{array}%
\right.
\end{equation*}

\item the $M$\textit{-horizontal covariant derivative }of $D\Gamma (N)$:%
\begin{equation*}
\left( h_{M}\right) \text{ }\left\{ 
\begin{array}{l}
T_{cj\left( b\right) \left( l\right) ...|s}^{ai\left( k\right) \left(
d\right) ...}=\dfrac{\delta T_{cj\left( b\right) \left( l\right)
...}^{ai\left( k\right) \left( d\right) ...}}{\delta x^{s}}+T_{cj\left(
b\right) \left( l\right) ...}^{fi\left( k\right) \left( d\right)
...}H_{fs}^{a}+\medskip \\ 
+T_{cj\left( b\right) \left( l\right) ...}^{ar\left( k\right) \left(
d\right) ...}H_{rs}^{i}+T_{cj\left( f\right) \left( l\right) ...}^{ai\left(
r\right) \left( d\right) ...}H_{\left( r\right) \left( b\right) s}^{\left(
f\right) \left( k\right) }+...-\medskip \\ 
-T_{fj\left( b\right) \left( l\right) ...}^{ai\left( k\right) \left(
d\right) ...}H_{cs}^{f}-T_{cr\left( b\right) \left( l\right) ...}^{ai\left(
k\right) \left( d\right) ...}H_{js}^{r}-T_{cj\left( b\right) \left( r\right)
...}^{ai\left( k\right) \left( f\right) ...}H_{\left( l\right) \left(
f\right) s}^{\left( d\right) \left( r\right) }-...,%
\end{array}%
\right.
\end{equation*}

\item the \textit{vertical covariant derivative }of $D\Gamma (N)$:%
\begin{equation*}
\left( v\right) \text{ }\left\{ 
\begin{array}{l}
T_{cj\left( b\right) \left( l\right) ...}^{ai\left( k\right) \left( d\right)
...}|_{\left( g\right) }^{\left( s\right) }=\dfrac{\partial T_{cj\left(
b\right) \left( l\right) ...}^{ai\left( k\right) \left( d\right) ...}}{%
\partial p_{s}^{g}}+T_{cj\left( b\right) \left( l\right) ...}^{fi\left(
k\right) \left( d\right) ...}C_{f\left( g\right) }^{a\left( s\right)
}+\medskip \\ 
+T_{cj\left( b\right) \left( l\right) ...}^{ar\left( k\right) \left(
d\right) ...}C_{r\left( g\right) }^{i\left( s\right) }+T_{cj\left( f\right)
\left( l\right) ...}^{ai\left( r\right) \left( d\right) ...}C_{\left(
r\right) \left( b\right) \left( g\right) }^{\left( f\right) \left( k\right)
\left( s\right) }+...-\medskip \\ 
-T_{fj\left( b\right) \left( l\right) ...}^{ai\left( k\right) \left(
d\right) ...}C_{c\left( g\right) }^{f\left( s\right) }-T_{cr\left( b\right)
\left( l\right) ...}^{ai\left( k\right) \left( d\right) ...}C_{j\left(
g\right) }^{r\left( s\right) }-T_{cj\left( b\right) \left( r\right)
...}^{ai\left( k\right) \left( f\right) ...}C_{\left( l\right) \left(
f\right) \left( g\right) }^{\left( d\right) \left( r\right) \left( s\right)
}-...\text{.}%
\end{array}%
\right.
\end{equation*}
\end{itemize}

\begin{remark}
If $T=Y$ is a d-vector field on $E^{\ast },$ locally expressed by%
\begin{equation*}
Y=Y^{a}\frac{\delta }{\delta t^{a}}+Y^{i}\frac{\delta }{\delta x^{i}}%
+Y_{\left( i\right) }^{\left( a\right) }\frac{\partial }{\partial p_{i}^{a}},
\end{equation*}%
then the following expressions of the local covariant derivatives hold good:%
\begin{equation*}
\left( h_{\mathcal{T}}\right) \text{ }\left\{ 
\begin{array}{l}
Y_{\text{ }/c}^{a}=\dfrac{\delta Y^{a}}{\delta t^{c}}+Y^{b}A_{bc}^{a},%
\medskip \\ 
Y_{\text{ }/c}^{i}=\dfrac{\delta Y^{i}}{\delta t^{c}}+Y^{j}A_{jc}^{i},%
\medskip \\ 
Y_{\left( i\right) /c}^{\left( a\right) }=\dfrac{\delta Y_{\left( i\right)
}^{\left( a\right) }}{\delta t^{c}}-Y_{\left( j\right) }^{\left( b\right)
}A_{\left( i\right) \left( b\right) c}^{\left( a\right) \left( j\right) },%
\end{array}%
\right. \left( h_{M}\right) \text{ }\left\{ 
\begin{array}{l}
Y_{\text{ }|k}^{a}=\dfrac{\delta Y^{a}}{\delta x^{k}}+Y^{b}H_{bk}^{a},%
\medskip \\ 
Y_{\text{ }|k}^{i}=\dfrac{\delta Y^{i}}{\delta x^{k}}+Y^{j}H_{jk}^{i},%
\medskip \\ 
Y_{\left( i\right) |k}^{\left( a\right) }=\dfrac{\delta Y_{\left( i\right)
}^{\left( a\right) }}{\delta x^{k}}-Y_{\left( j\right) }^{\left( b\right)
}H_{\left( i\right) \left( b\right) k}^{\left( a\right) \left( j\right) },%
\end{array}%
\right.
\end{equation*}
\end{remark}

\begin{equation*}
\left( v\right) \text{ }\left\{ 
\begin{array}{l}
Y^{a}|_{\left( c\right) }^{\left( k\right) }=\dfrac{\partial Y^{a}}{\partial
p_{k}^{c}}+Y^{b}C_{b\left( c\right) }^{a\left( k\right) },\medskip \\ 
Y^{i}|_{\left( c\right) }^{\left( k\right) }=\dfrac{\partial Y^{i}}{\partial
p_{k}^{c}}+Y^{j}C_{j\left( c\right) }^{i\left( k\right) },\medskip \\ 
Y_{\left( i\right) }^{\left( a\right) }|_{\left( c\right) }^{\left( k\right)
}=\dfrac{\partial Y_{\left( i\right) }^{\left( a\right) }}{\partial p_{k}^{c}%
}-Y_{\left( j\right) }^{\left( b\right) }C_{\left( i\right) \left( b\right)
\left( c\right) }^{\left( a\right) \left( j\right) \left( k\right) }.%
\end{array}%
\right.
\end{equation*}

\section{Components of $h$-normal $N$-linear connections on dual $1$-jet
spaces}

Because the number of components which characterize an $N$-linear connection
on $E^{\ast }$ is big one (nine local components), we are constrained to
study only a particular class of $N$-linear connections on $E^{\ast }$,
which must be characterized by a reduced number of components. In this
direction, let us fix on the temporal manifold $\mathcal{T}$ a
semi-Riemannian metric $h_{ab}$, together with its Christoffel symbols $\chi
_{bc}^{a}$. Let $\mathbb{J}$ be the $h$\textit{-normalization d-tensor field 
}on $E^{\ast }$, locally expressed by \cite{Atan+Neag1}%
\begin{equation*}
\mathbb{J}=J_{\left( a\right) bj}^{\left( i\right) }\delta p_{i}^{a}\otimes
dt^{b}\otimes dx^{j},
\end{equation*}%
where $J_{\left( a\right) bj}^{\left( i\right) }=h_{ab}\delta _{j}^{i}$. In
this context, we introduce the following geometrical concept:

\begin{definition}
An $N$-linear connection $D\Gamma (N)$ on $E^{\ast }$, whose local
components (\ref{coef_N-cl}) verify the relations%
\begin{equation*}
\begin{array}{cccc}
A_{bc}^{a}=\chi _{bc}^{a},\quad & H_{bi}^{a}=0,\quad & C_{b\left( c\right)
}^{a\left( i\right) }=0,\quad & D\mathbb{J}=0,%
\end{array}%
\end{equation*}%
is called an $h$\textbf{-normal }$N$\textbf{-linear connection} on the dual $%
1$-jet fibre bundle $E^{\ast }$.
\end{definition}

\begin{theorem}
The adapted components of an $h$-normal $N$-linear connection $D\Gamma (N)$
verify the following identities:%
\begin{equation}
\begin{array}{c}
A_{bc}^{a}=\chi _{bc}^{a},\quad H_{bi}^{a}=0,\quad C_{b\left( c\right)
}^{a\left( i\right) }=0,\medskip \\ 
A_{\left( i\right) \left( b\right) c}^{\left( a\right) \left( j\right)
}=\delta _{b}^{a}A_{ic}^{j}-\delta _{i}^{j}\chi _{bc}^{a},\quad H_{\left(
i\right) \left( b\right) k}^{\left( a\right) \left( j\right) }=\delta
_{b}^{a}H_{ik}^{j},\medskip \\ 
C_{\left( i\right) \left( b\right) \left( c\right) }^{\left( a\right) \left(
j\right) \left( k\right) }=\delta _{b}^{a}C_{i\left( c\right) }^{j\left(
k\right) }.%
\end{array}
\label{2.22}
\end{equation}
\end{theorem}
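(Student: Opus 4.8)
The plan is to notice first that the entire top line of (\ref{2.22}), namely $A^{a}_{bc}=\chi^{a}_{bc}$, $H^{a}_{bi}=0$ and $C^{a(i)}_{b(c)}=0$, is simply the defining data of an $h$-normal $N$-linear connection, so nothing there needs proof. The three genuine identities (the second and third lines) must therefore all be extracted from the single tensorial condition $D\mathbb{J}=0$, and the whole argument is just a careful unpacking of that equation. Accordingly, I would decompose $D\mathbb{J}=0$ into its three adapted pieces, exactly as the covariant derivative $D_{X}T$ was split above into an $(h_{\mathcal{T}})$, an $(h_{M})$ and a $(v)$ part. Since $\mathbb{J}$ carries the single nonzero block $J^{(i)}_{(a)bj}=h_{ab}\delta^{i}_{j}$, this yields the three scalar equations $J^{(i)}_{(a)bj/c}=0$, $J^{(i)}_{(a)bj|k}=0$ and $J^{(i)}_{(a)bj}|^{(k)}_{(c)}=0$.

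Next I would substitute $J^{(i)}_{(a)bj}=h_{ab}\delta^{i}_{j}$ into the three general covariant-derivative formulas recalled above, keeping only the index slots actually carried by $\mathbb{J}$: the vertical covector slot $\delta p^{a}_{i}$ (contributing the pair $(a)$ down, $(i)$ up), the temporal covector slot $dt^{b}$, and the spatial covector slot $dx^{j}$. The two computational inputs are that $h_{ab}=h_{ab}(t)$ depends on the base point only, whence $\delta h_{ab}/\delta x^{k}=0$ and $\partial h_{ab}/\partial p^{c}_{k}=0$ (and also $\delta h_{ab}/\delta t^{c}=\partial h_{ab}/\partial t^{c}$), together with the Levi-Civita compatibility $\partial h_{ab}/\partial t^{c}=\chi^{f}_{ac}h_{fb}+\chi^{f}_{bc}h_{af}$ of $\chi$ with $h$. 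In the $(v)$ and $(h_{M})$ equations the correction attached to the $dt^{b}$ slot carries the factor $C^{f(k)}_{b(c)}$, respectively $H^{f}_{bk}$, which vanishes by the defining conditions; what survives is $h_{fb}C^{(f)(i)(k)}_{(j)(a)(c)}=h_{ab}C^{i(k)}_{j(c)}$, respectively $h_{fb}H^{(f)(i)}_{(j)(a)k}=h_{ab}H^{i}_{jk}$, and contracting with the inverse metric $h^{be}$ immediately produces the third and second identities of (\ref{2.22}).

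The $(h_{\mathcal{T}})$ equation is the one I expect to be the real obstacle, because here the base metric is not parallel for the naive partial derivative and the $dt^{b}$ correction carries the nonzero factor $A^{f}_{bc}=\chi^{f}_{bc}$. The plan is to feed in the compatibility expression for $\partial h_{ab}/\partial t^{c}$ and watch the Christoffel terms partly cancel: the term $-h_{af}\delta^{i}_{j}\chi^{f}_{bc}$ coming from the $dt^{b}$ slot cancels exactly against the $\chi^{f}_{bc}h_{af}$ half of $\partial h_{ab}/\partial t^{c}$, while the other half $\chi^{f}_{ac}h_{fb}$ survives. This leaves $h_{fb}A^{(f)(i)}_{(j)(a)c}=h_{ab}A^{i}_{jc}-\delta^{i}_{j}\chi^{f}_{ac}h_{fb}$, and a final contraction with $h^{be}$ gives $A^{(a)(j)}_{(i)(b)c}=\delta^{a}_{b}A^{j}_{ic}-\delta^{j}_{i}\chi^{a}_{bc}$, which is precisely the characteristic first identity of the second line, including its $-\delta^{j}_{i}\chi^{a}_{bc}$ correction term.

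Finally I would remark that the repeated contraction with $h^{be}$ is legitimate throughout, since $h_{ab}$ is semi-Riemannian and hence nondegenerate, so that the inverse $h^{be}$ exists. The only subtle point in the whole proof is the partial cancellation of Christoffel symbols in the $(h_{\mathcal{T}})$ block; the other two identities follow essentially at once once the $x$- and $p$-independence of $h_{ab}$ has been invoked.
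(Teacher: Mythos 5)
Your proposal is correct and follows essentially the same route as the paper: the first three relations are read off from the definition, and the remaining three are obtained by splitting $D\mathbb{J}=0$ into the conditions $J_{\left( a\right) bj/g}^{\left( i\right) }=0$, $J_{\left( a\right) bj|s}^{\left( i\right) }=0$, $J_{\left( a\right) bj}^{\left( i\right) }|_{\left( g\right) }^{\left( s\right) }=0$ and then contracting with $h^{be}$. The only difference is presentational: you make explicit the Levi--Civita compatibility $\partial h_{ab}/\partial t^{c}=\chi _{ac}^{f}h_{fb}+\chi _{bc}^{f}h_{af}$ and the resulting cancellation of Christoffel terms, which the paper leaves implicit in its final contraction step.
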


\begin{proof}
It is obvious that the first three relations come immediately from the
definition of an $h$-normal $N$-linear connection. To prove the other three
relations, we emphasize that, taking into account the definition of the
local $\mathcal{T}$-horizontal\ ($"_{/g}"$), $M$-horizontal ($"_{|s}"$) and
vertical ($"|_{\left( g\right) }^{\left( s\right) }")$ covariant derivatives
produced by $D\Gamma (N)$, the condition $D\mathbb{J}=0$ is equivalent to%
\begin{equation*}
\begin{array}{ccc}
J_{\left( a\right) bj/g}^{\left( i\right) }=0,\quad & J_{\left( a\right)
bj|s}^{\left( i\right) }=0,\quad & J_{\left( a\right) bj}^{\left( i\right)
}|_{\left( g\right) }^{\left( s\right) }=0.%
\end{array}%
\end{equation*}%
Consequently, the condition $D\mathbb{J}=0$ provides the local identities%
\begin{equation*}
\begin{array}{c}
h_{bf}A_{\left( j\right) \left( a\right) c}^{\left( f\right) \left( i\right)
}=h_{ab}A_{jc}^{i}-\delta _{j}^{i}\left( \dfrac{\partial h_{ab}}{\partial
t^{c}}-h_{ag}\chi _{bc}^{g}\right) ,\medskip \\ 
h_{bf}H_{\left( j\right) \left( a\right) k}^{\left( f\right) \left( i\right)
}=h_{ba}H_{jk}^{i},\quad h_{bf}C_{\left( j\right) \left( a\right) \left(
c\right) }^{\left( f\right) \left( i\right) \left( k\right)
}=h_{ba}C_{j\left( c\right) }^{i\left( k\right) }.%
\end{array}%
\end{equation*}%
Contracting now the above relations by $h^{be}$, we obtain the last required
identities from (\ref{2.22}).
\end{proof}

\begin{remark}
The above theorem says us that an $h$-normal $N$-linear connection on $%
E^{\ast }$ is an $N$-linear connection determined by \textbf{four} effective
components (instead of nine in the general case):%
\begin{equation*}
D\Gamma (N)=\left( \chi _{bc}^{a},\text{ }A_{jc}^{i},\text{ }H_{jk}^{i},%
\text{ }C_{j\left( c\right) }^{i\left( k\right) }\right) .
\end{equation*}%
The other five components either vanish or are provided by the relations (%
\ref{2.22}). Consequently, we can assert that the Berwald $N_{0}$-linear
connection associated to the pair of metrics $\left( h_{ab},\varphi
_{ij}\right) $ is an $h$-normal $N_{0}$-linear connection on $E^{\ast }$,
whose four effective components are%
\begin{equation*}
B\Gamma \left( N_{0}\right) =\left( \chi _{bc}^{a},\text{ }0,\text{ }\Gamma
_{jk}^{i},\text{ }0\right) .
\end{equation*}
\end{remark}

\section{Adapted components of torsion and curvature tensors}

The study of the adapted components of the torsion and curvature tensors of
an arbitrary $N$-linear connection $D\Gamma (N)$ on $E^{\ast }$ was done in 
\cite{Atan+Neag1}. In that context, one proves that the torsion tensor $%
\mathbb{T}$ is determined by \textit{twelve} effective local adapted
d-tensors, while the curvature tensor $\mathbb{R}$ is determined by \textit{%
eighteen} local adapted d-tensors. In what follows, we study the adapted
components of the torsion and curvature tensors for an $h$-normal $N$-linear
connection $D\Gamma (N)$.

\begin{theorem}
The torsion tensor $\mathbb{T}$ of an $h$-normal $N$-linear connection $%
D\Gamma (N)$ is determined by \textbf{nine} effective local adapted
d-tensors (instead of twelve in the general case):%
\begin{equation}
\begin{tabular}{||l||l||l||l||}
\hline\hline
& $h_{T}$ & $h_{M}$ & $v$ \\ \hline\hline
$h_{T}h_{T}$ & $0$ & $0$ & $R_{\left( r\right) ab}^{\left( f\right) }$ \\ 
\hline\hline
$h_{M}h_{T}$ & $0$ & $T_{aj}^{r}$ & $R_{\left( r\right) aj}^{\left( f\right)
}$ \\ \hline\hline
$vh_{T}$ & $0$ & $0$ & $P_{\left( r\right) a\left( b\right) }^{\left(
f\right) \left. {}\right. \left( j\right) }$ \\ \hline\hline
$h_{M}h_{M}$ & $0$ & $T_{ij}^{r}$ & $R_{\left( r\right) ij}^{\left( f\right)
}$ \\ \hline\hline
$vh_{M}$ & $0$ & $P_{i\left( b\right) }^{r\left( j\right) }$ & $P_{\left(
r\right) i\left( b\right) }^{\left( f\right) \left. {}\right. \left(
j\right) }$ \\ \hline\hline
$vv$ & $0$ & $0$ & $S_{\left( r\right) \left( a\right) \left( b\right)
}^{\left( f\right) \left( i\right) \left( j\right) }$ \\ \hline\hline
\end{tabular}
\label{Table1}
\end{equation}%
where%
\begin{equation*}
T_{aj}^{r}=-A_{aj}^{r},\quad T_{ij}^{r}=H_{ij}^{r}-H_{ji}^{r},\quad
P_{i\left( b\right) }^{r\left( j\right) }=C_{i\left( b\right) }^{r\left(
j\right) },
\end{equation*}%
\begin{equation*}
P_{\left( r\right) a\left( b\right) }^{\left( f\right) \ \left( j\right) }=%
\dfrac{\partial \underset{1}{N}\overset{\left( f\right) }{_{\left( r\right)
a}}}{\partial p_{j}^{b}}+\delta _{b}^{f}A_{ra}^{j}-\delta _{r}^{j}\chi
_{ba}^{f},\quad P_{\left( r\right) i\left( b\right) }^{\left( f\right) \
\left( j\right) }=\dfrac{\partial \underset{2}{N}\overset{\left( f\right) }{%
_{\left( r\right) i}}}{\partial p_{j}^{b}}+\delta _{b}^{f}H_{ri}^{j},
\end{equation*}%
\begin{equation*}
R_{\left( r\right) ab}^{\left( f\right) }=\dfrac{\delta \underset{1}{N}%
\overset{\left( f\right) }{_{\left( r\right) a}}}{\delta t^{b}}-\dfrac{%
\delta \underset{1}{N}\overset{\left( f\right) }{_{\left( r\right) b}}}{%
\delta t^{a}},\quad R_{\left( r\right) aj}^{\left( f\right) }=\dfrac{\delta 
\underset{1}{N}\overset{\left( f\right) }{_{\left( r\right) a}}}{\delta x^{j}%
}-\dfrac{\delta \underset{2}{N}\overset{\left( f\right) }{_{\left( r\right)
i}}}{\delta t^{a}},
\end{equation*}%
\begin{equation*}
R_{\left( r\right) ij}^{\left( f\right) }=\dfrac{\delta \underset{2}{N}%
\overset{\left( f\right) }{_{\left( r\right) i}}}{\delta x^{j}}-\dfrac{%
\delta \underset{2}{N}\overset{\left( f\right) }{_{\left( r\right) j}}}{%
\delta x^{i}},\quad S_{\left( r\right) \left( a\right) \left( b\right)
}^{\left( f\right) \left( i\right) \left( j\right) }=-\left( \delta
_{a}^{f}C_{r\left( b\right) }^{i\left( j\right) }-\delta _{b}^{f}C_{r\left(
a\right) }^{j\left( i\right) }\right) .
\end{equation*}
\end{theorem}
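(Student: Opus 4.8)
The plan is to compute $\mathbb{T}$ directly from its definition
\[
\mathbb{T}(X,Y)=D_{X}Y-D_{Y}X-[X,Y],
\]
evaluating it on all pairs of fields drawn from the adapted basis $\{\delta /\delta t^{a},\ \delta /\delta x^{i},\ \partial /\partial p_{i}^{a}\}$. Since $\mathbb{T}$ is a d-tensor and $\mathcal{X}(E^{\ast })$ splits as $\mathcal{X}(\mathcal{H}_{\mathcal{T}})\oplus \mathcal{X}(\mathcal{H}_{M})\oplus \mathcal{X}(\mathcal{V})$, there are exactly the six inequivalent input pairs listed as the rows of (\ref{Table1}) together with three possible output components (its columns), i.e. eighteen scalar coefficients a priori. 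Six of these vanish identically for any $N$-linear connection (the three $h_{\mathcal{T}}$-outputs on the $(h_{M},h_{M})$-, $(v,h_{M})$-, $(v,v)$-pairs and the three $h_{M}$-outputs on the $(h_{\mathcal{T}},h_{\mathcal{T}})$-, $(v,h_{\mathcal{T}})$-, $(v,v)$-pairs), which leaves the twelve of the general case; the content here is that $h$-normality forces three more to vanish and pins the survivors in the stated form.

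First I would record the Lie brackets of the adapted basis, using only the expressions for $\delta /\delta t^{a}$ and $\delta /\delta x^{i}$ from (\ref{bz}). A short computation shows that every bracket is purely vertical, and that the horizontal--horizontal brackets reproduce exactly the curvature-type quantities of the nonlinear connection, e.g.
\[
\left[ \frac{\delta }{\delta t^{a}},\frac{\delta }{\delta t^{b}}\right] =R_{(r)ab}^{(f)}\frac{\partial }{\partial p_{r}^{f}},\qquad \left[ \frac{\delta }{\delta x^{i}},\frac{\delta }{\delta x^{j}}\right] =R_{(r)ij}^{(f)}\frac{\partial }{\partial p_{r}^{f}},
\]
while the mixed brackets $[\delta /\delta t^{a},\partial /\partial p_{j}^{b}]$ and $[\delta /\delta x^{i},\partial /\partial p_{j}^{b}]$ are purely vertical, with coefficients $\partial \underset{1}{N}\overset{(f)}{_{(r)a}}/\partial p_{j}^{b}$ and $\partial \underset{2}{N}\overset{(f)}{_{(r)i}}/\partial p_{j}^{b}$. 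The one technical point here is that a naive bracket produces bare $\partial _{t}N$ (resp. $\partial _{x}N$) terms together with $N\,\partial _{p}N$ cross terms; these recombine into the $\delta $-derivatives appearing in the definitions of the $R$'s, which is exactly the reorganization that makes the answer tensorial.

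Next I would insert the action of $D$ on the basis as listed in (\ref{coef_N-cl}), form $D_{X}Y-D_{Y}X$ for each pair, and impose the $h$-normal identities (\ref{2.22}). This is where the reduction from twelve to nine occurs, and it happens entirely in the $h_{\mathcal{T}}$-column: the $h_{\mathcal{T}}$-component on the $(h_{\mathcal{T}},h_{\mathcal{T}})$-pair is $A_{ba}^{c}-A_{ab}^{c}=\chi _{ba}^{c}-\chi _{ab}^{c}=0$ by symmetry of the temporal Christoffel symbols, while the $h_{\mathcal{T}}$-components on the $(h_{M},h_{\mathcal{T}})$- and $(v,h_{\mathcal{T}})$-pairs equal $H_{bi}^{a}$ and $C_{b(c)}^{a(k)}$, both zero by definition of an $h$-normal connection. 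For each of the nine surviving entries I would then substitute the reduced forms $A_{(i)(b)c}^{(a)(j)}=\delta _{b}^{a}A_{ic}^{j}-\delta _{i}^{j}\chi _{bc}^{a}$, $H_{(i)(b)k}^{(a)(j)}=\delta _{b}^{a}H_{ik}^{j}$ and $C_{(i)(b)(c)}^{(a)(j)(k)}=\delta _{b}^{a}C_{i(c)}^{j(k)}$ and fuse them with the bracket terms: combining $-A_{(i)(b)a}^{(f)(j)}$ with $\partial \underset{1}{N}\overset{(f)}{_{(i)a}}/\partial p_{j}^{b}$ reproduces $P_{(r)a(b)}^{(f)\,(j)}$, and antisymmetrizing the two $C$-terms on the $(v,v)$-pair yields $S_{(r)(a)(b)}^{(f)(i)(j)}=-(\delta _{a}^{f}C_{r(b)}^{i(j)}-\delta _{b}^{f}C_{r(a)}^{j(i)})$.

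The main obstacle is not conceptual but organizational: the bookkeeping of the paired vertical indices (an upper $(f)$ together with a lower $(r)$) must be tracked carefully so that, in each of the nine cases, the $\partial _{p}N$ contribution from the Lie bracket fuses correctly with the $\delta _{b}^{a}A$, $\delta _{b}^{a}H$, $\delta _{b}^{a}C$ and $\chi $ pieces coming from the reduced connection coefficients to give precisely the d-tensor named in (\ref{Table1}). A secondary care point is to fix a single ordering and sign convention for $\mathbb{T}(X,Y)$ and to read off the coefficients consistently, since each of $R$, $T$, $P$ and $S$ is (anti)symmetric in a pair of its indices; once the convention is fixed and (\ref{2.22}) has been applied, the nine formulas follow by a routine case-by-case verification.
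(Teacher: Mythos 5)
Your proposal is correct, and it is essentially the paper's argument carried out from first principles rather than by citation. The paper's own proof of this theorem is a single sentence: it invokes the general expressions of the twelve torsion d-components of an arbitrary $N$-linear connection, already computed in \cite{Atan+Neag1}, particularizes them through the $h$-normal identities (\ref{2.22}), and records that $T_{bc}^{a}$, $T_{bj}^{a}$ and $P_{b\left( c\right) }^{a\left( k\right) }$ vanish. You instead rebuild those general expressions yourself — the adapted-basis Lie brackets (correctly observed to be purely vertical, with the bare $\partial N$ terms and the $N\,\partial_{p}N$ cross terms recombining into the $\delta$-derivatives that define the $R$'s), followed by insertion of the coefficients (\ref{coef_N-cl}) into $\mathbb{T}(X,Y)=D_{X}Y-D_{Y}X-[X,Y]$ — and only then particularize via (\ref{2.22}). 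Your bookkeeping agrees with the paper's: of the eighteen a priori scalar components, the six you list are exactly the ones vanishing identically for every $N$-linear connection, and the passage from twelve to nine happens entirely in the $h_{\mathcal{T}}$-column, through the symmetry of $\chi_{bc}^{a}$ (killing $A_{bc}^{a}-A_{cb}^{a}$) and the defining conditions $H_{bi}^{a}=0$, $C_{b\left( c\right) }^{a\left( i\right) }=0$; these are precisely the three vanishings the paper deduces. What the paper's route buys is brevity, at the price of resting on the external reference; what yours buys is a self-contained verification, including an explicit account of how each surviving entry of (\ref{Table1}) arises from fusing bracket terms with the reduced coefficients of (\ref{2.22}). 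The one point needing care in your write-up is the one you flag yourself: the signs in the table depend on the ordering convention, which must be fixed compatibly with the convention $\mathbb{T}(Y_{C},Y_{B})=\left\{ \Gamma_{BC}^{F}-\Gamma_{CB}^{F}-R_{CB}^{F}\right\} Y_{F}$ that the paper uses later, and then applied uniformly across all six pairs.
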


\begin{proof}
Particularizing the general local expressions from \cite{Atan+Neag1}, which
generally give those twelve d-components of the torsion tensor of an $N$%
-linear connection, an $h$-normal $N$-linear connection $D\Gamma (N)$, we
deduce that the adapted components $T_{bc}^{a},$ $T_{bj}^{a}$ and $%
P_{b\left( c\right) }^{a\left( k\right) }$ vanish, while the other nine are
given by the formulas from theorem.
\end{proof}

\begin{remark}
All torsion d-tensors of the Berwald $h$-normal $N_{0}$-linear connection $%
B\Gamma \left( N_{0}\right) $ (associated to the metrics $h_{ab}$ and $%
\varphi _{ij}$) are zero, except%
\begin{equation*}
\begin{array}{ll}
R_{\left( r\right) ab}^{\left( f\right) }=\chi _{gab}^{f}p_{r}^{g}, & 
R_{\left( r\right) ij}^{\left( f\right) }=-\mathcal{R}_{rij}^{s}p_{s}^{f},%
\end{array}%
\end{equation*}%
where $\chi _{gab}^{f}(t)$ (resp. $\mathcal{R}_{rij}^{s}(x)$) are the local
curvature tensors of the semi-Rie\-ma\-nni\-an metric $h_{ab}$ (resp. $%
\varphi _{ij}$).
\end{remark}

\begin{theorem}
The curvature tensor $\mathbb{R}$ of an $h$-normal $N$-linear connection $%
D\Gamma (N)$ is characterized by \textbf{seven} effective adapted local
d-tensors (instead of eighteen in the general case):%
\begin{equation}
\begin{tabular}{||l||l||l||l||}
\hline\hline
& $h_{\mathcal{T}}$ & $h_{M}$ & $v$ \\ \hline\hline
$h_{\mathcal{T}}h_{\mathcal{T}}$ & $\chi _{abc}^{d}$ & $R_{ibc}^{l}$ & $%
-R_{\left( l\right) \left( a\right) bc}^{\left( d\right) \left( i\right)
}=\delta _{l}^{i}\chi _{abc}^{d}-\delta _{a}^{d}R_{lbc}^{i}$ \\ \hline\hline
$h_{M}h_{\mathcal{T}}$ & $0$ & $R_{ibk}^{l}$ & $-R_{\left( l\right) \left(
a\right) bk}^{\left( d\right) \left( i\right) }=-\delta _{a}^{d}R_{lbk}^{i}$
\\ \hline\hline
$wh_{\mathcal{T}}$ & $0$ & $P_{ib\left( c\right) }^{l\ \left( k\right) }$ & $%
-P_{\left( l\right) \left( a\right) b\left( c\right) }^{\left( d\right)
\left( i\right) \ \left( k\right) }=-\delta _{a}^{d}P_{lb\left( c\right)
}^{i\ \left( k\right) }$ \\ \hline\hline
$h_{M}h_{M}$ & $0$ & $R_{ijk}^{l}$ & $-R_{\left( l\right) \left( a\right)
jk}^{\left( d\right) \left( i\right) }=-\delta _{a}^{d}R_{ljk}^{i}$ \\ 
\hline\hline
$wh_{M}$ & $0$ & $P_{ij\left( c\right) }^{l\ \left( k\right) }$ & $%
-P_{\left( l\right) \left( a\right) j\left( c\right) }^{\left( d\right)
\left( i\right) \ \left( k\right) }=-\delta _{a}^{d}P_{lj\left( c\right)
}^{i\ \left( k\right) }$ \\ \hline\hline
$ww$ & $0$ & $S_{i\left( b\right) \left( c\right) }^{l\left( j\right) \left(
k\right) }$ & $-S_{\left( l\right) \left( a\right) \left( b\right) \left(
c\right) }^{\left( d\right) \left( i\right) \left( j\right) \left( k\right)
}=-\delta _{a}^{d}S_{l\left( b\right) \left( c\right) }^{i\left( j\right)
\left( k\right) }$ \\ \hline\hline
\end{tabular}
\label{Table2}
\end{equation}%
where$\medskip $

$R_{abc}^{d}:=\chi _{abc}^{d}=\dfrac{\delta \chi _{ab}^{d}}{\delta t^{c}}-%
\dfrac{\delta \chi _{ac}^{d}}{\delta t^{b}}+\chi _{ab}^{f}\chi
_{fc}^{d}-\chi _{ac}^{f}\chi _{fb}^{d},\medskip $

$R_{ibc}^{l}=\dfrac{\delta A_{ib}^{l}}{\delta t^{c}}-\dfrac{\delta A_{ic}^{l}%
}{\delta t^{b}}+A_{ib}^{r}A_{rc}^{l}-A_{ic}^{r}A_{rb}^{l}+C_{i\left(
f\right) }^{l\left( r\right) }R_{\left( r\right) bc}^{\left( f\right)
},\medskip $

$R_{ibk}^{l}=\dfrac{\delta A_{ib}^{l}}{\delta x^{k}}-\dfrac{\delta H_{ik}^{l}%
}{\delta t^{b}}+A_{ib}^{r}H_{rk}^{l}-H_{ik}^{r}A_{rb}^{l}+C_{i\left(
f\right) }^{l\left( r\right) }R_{\left( r\right) bk}^{\left( f\right)
},\medskip $

$P_{ib\left( c\right) }^{l\ \left( k\right) }=\dfrac{\partial A_{ib}^{l}}{%
\partial p_{k}^{c}}-C_{i\left( c\right) /b}^{l\left( k\right) }+C_{i\left(
f\right) }^{l\left( r\right) }P_{\left( r\right) b\left( c\right) }^{\left(
f\right) \ \left( k\right) },\medskip $

$R_{ijk}^{l}=\dfrac{\delta H_{ij}^{l}}{\delta x^{k}}-\dfrac{\delta H_{ik}^{l}%
}{\delta x^{j}}+H_{ij}^{r}H_{rk}^{l}-H_{ik}^{r}H_{rj}^{l}+C_{i\left(
f\right) }^{l\left( r\right) }R_{\left( r\right) jk}^{\left( f\right)
},\medskip $

$P_{ij\left( c\right) }^{l\ \left( k\right) }=\dfrac{\partial H_{ij}^{l}}{%
\partial p_{k}^{c}}-C_{i\left( c\right) |j}^{l\left( k\right) }+C_{i\left(
r\right) }^{l\left( f\right) }P_{\left( f\right) j\left( c\right) }^{\left(
r\right) \ \left( k\right) },\medskip $

$S_{i\left( b\right) \left( c\right) }^{l\left( j\right) \left( k\right) }=%
\dfrac{\partial C_{i\left( b\right) }^{l\left( j\right) }}{\partial p_{k}^{c}%
}-\dfrac{\partial C_{i\left( c\right) }^{l\left( k\right) }}{\partial
p_{j}^{b}}+C_{i\left( b\right) }^{r\left( j\right) }C_{r\left( c\right)
}^{l\left( k\right) }-C_{i\left( c\right) }^{r\left( k\right) }C_{r\left(
b\right) }^{l\left( j\right) }.$
\end{theorem}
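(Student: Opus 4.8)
The plan is to mimic the proof strategy of the preceding torsion theorem: start from the general formulas for the \emph{eighteen} curvature d-tensors of an arbitrary $N$-linear connection established in \cite{Atan+Neag1}, and specialize them using the structure relations (\ref{2.22}) of an $h$-normal $N$-linear connection. First I would record those general expressions, organizing them by the position pair $(X,Y)$ of the curvature operator $\mathbb{R}(X,Y)=D_XD_Y-D_YD_X-D_{[X,Y]}$ — which, by antisymmetry, runs over the six types $h_{\mathcal{T}}h_{\mathcal{T}}$, $h_Mh_{\mathcal{T}}$, $vh_{\mathcal{T}}$, $h_Mh_M$, $vh_M$, $vv$ listed as the rows of (\ref{Table2}) — and by the distribution ($h_{\mathcal{T}}$, $h_M$, or $v$) in which the argument $Z$ lives. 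Since $D\Gamma(N)$ preserves each distribution, the output of $\mathbb{R}(X,Y)$ on $Z$ stays in the distribution of $Z$, so this bookkeeping indeed produces exactly the $6\times 3=18$ entries of the general case.

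Next I would dispatch the $h_{\mathcal{T}}$-target column. Because an $h$-normal connection satisfies $H_{bi}^{a}=0$ and $C_{b\left( c\right) }^{a\left( i\right) }=0$, every curvature term acting on a temporal-horizontal vector through an $M$-horizontal or a vertical direction drops out; moreover $A_{bc}^{a}=\chi_{bc}^{a}$ depends only on $t^{f}$, so its $\delta/\delta x$- and $\partial/\partial p$-derivatives vanish. Hence all entries of this column vanish except the $h_{\mathcal{T}}h_{\mathcal{T}}$ one, which collapses to the classical Riemann curvature $R_{abc}^{d}:=\chi_{abc}^{d}$ of the temporal metric $h_{ab}$. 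The $h_M$-target column is then read off directly: retaining only the effective spatial coefficients $A_{jc}^{i}$, $H_{jk}^{i}$, $C_{j\left( c\right) }^{i\left( k\right) }$ in the general formulas yields the six d-tensors $R_{ibc}^{l}$, $R_{ibk}^{l}$, $P_{ib\left( c\right) }^{l\left( k\right) }$, $R_{ijk}^{l}$, $P_{ij\left( c\right) }^{l\left( k\right) }$, $S_{i\left( b\right) \left( c\right) }^{l\left( j\right) \left( k\right) }$, with the curvature d-tensors $R_{\left( r\right) ab}^{\left( f\right) }$, $R_{\left( r\right) bk}^{\left( f\right) }$, $R_{\left( r\right) jk}^{\left( f\right) }$ of the torsion theorem entering through the $C_{i\left( f\right) }^{l\left( r\right) }R_{\left( r\right) \bullet\bullet}^{\left( f\right) }$ terms.

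The main work — and the step I expect to be the real obstacle — is the $v$-target column. Here I would substitute the three derived relations from (\ref{2.22}), namely $A_{\left( i\right) \left( b\right) c}^{\left( a\right) \left( j\right) }=\delta_{b}^{a}A_{ic}^{j}-\delta_{i}^{j}\chi_{bc}^{a}$, $H_{\left( i\right) \left( b\right) k}^{\left( a\right) \left( j\right) }=\delta_{b}^{a}H_{ik}^{j}$, and $C_{\left( i\right) \left( b\right) \left( c\right) }^{\left( a\right) \left( j\right) \left( k\right) }=\delta_{b}^{a}C_{i\left( c\right) }^{j\left( k\right) }$, into the general vertical curvature formulas. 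The key algebraic fact to verify is that, because each vertical coefficient splits into a spatial piece carrying the factor $\delta_{b}^{a}$ and a purely temporal $\chi$-piece carrying $\delta_{i}^{j}$, the curvature of $\mathcal{V}$ behaves like that of the tensor product $T\mathcal{T}\otimes T^{\ast}M$ suggested by the transformation rule (\ref{schbz}) of $\partial/\partial p_{i}^{a}$: the $\chi$-contributions cancel by the antisymmetrization in the two non-temporal position arguments, leaving precisely $-\delta_{a}^{d}$ times the corresponding $h_M$-component. The single exception is the $h_{\mathcal{T}}h_{\mathcal{T}}$ entry, where both position arguments are temporal, so the $\chi$-piece does \emph{not} cancel but reproduces the temporal Riemann tensor, producing the extra summand in $-R_{\left( l\right) \left( a\right) bc}^{\left( d\right) \left( i\right) }=\delta_{l}^{i}\chi_{abc}^{d}-\delta_{a}^{d}R_{lbc}^{i}$. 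Tracking these cancellations cleanly — keeping the index bookkeeping straight and confirming that no cross-term between the $\delta_{b}^{a}$- and $\delta_{i}^{j}$-pieces survives — is the delicate part, with the tensor-product picture serving as the guide for which terms must disappear. Finally I would tally the independent d-tensors: one from the temporal column, $\chi_{abc}^{d}$, together with the six from the $M$-horizontal column, the entire vertical column being determined by these, for a total of seven.
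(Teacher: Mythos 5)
Your proposal is correct and follows essentially the same route as the paper, whose proof is a one-line appeal to specializing the general curvature formulas of \cite{Atan+Neag1} to the $h$-normal case via the relations (\ref{2.22}). In fact you supply more detail than the paper does — in particular the cancellation of the cross-terms between the $\delta _{b}^{a}$-piece and the $\delta _{i}^{j}\chi $-piece in the vertical column, which is exactly the computation hiding behind the paper's phrase ``imply the above formulas and the relations from the Table.''
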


\begin{proof}
The general formulas that express the local curvature d-tensors of an
arbitrary $N$-linear connection (for more details, see \cite{Atan+Neag1}),
applied to the particular case of an $h$-normal $N$-linear connection $%
D\Gamma (N)$, imply the above formulas and the relations from the Table (\ref%
{Table2}).
\end{proof}

\begin{remark}
In the case of the Berwald $h$-normal $N_{0}$-linear connection $B\Gamma
(N_{0})$ (associated to the pair of metrics $\left( h_{ab},\varphi
_{ij}\right) $), all curvature d-tensors are zero, except%
\begin{equation*}
R_{abc}^{d}=\chi _{abc}^{d},\quad R_{\left( l\right) \left( a\right)
bc}^{\left( d\right) \left( i\right) }=-\delta _{l}^{i}\chi _{abc}^{d},\quad
R_{ijk}^{l}=\mathcal{R}_{ijk}^{l}\quad R_{\left( i\right) \left( a\right)
jk}^{\left( d\right) \left( l\right) }=\delta _{a}^{d}\mathcal{R}_{ijk}^{l},
\end{equation*}%
where $\chi _{gab}^{f}(t)$ (resp. $\mathcal{R}_{rij}^{s}(x)$) are the local
curvature tensors of the semi-Rie\-ma\-nni\-an metric $h_{ab}$ (resp. $%
\varphi _{ij}$).
\end{remark}

\section{Local Ricci identities. Non-metrical deflection d-tensor identities}

Let us consider now the following more particular geometrical concept:

\begin{definition}
An $h$-normal $N$-linear connection, whose local components%
\begin{equation*}
CD\Gamma (N)=\left( \chi _{bc}^{a},\text{ }A_{jc}^{i},\text{ }H_{jk}^{i},%
\text{ }C_{j\left( c\right) }^{i\left( k\right) }\right) ,
\end{equation*}%
verify the relations%
\begin{equation*}
H_{jk}^{i}=H_{kj}^{i},\qquad C_{j\left( c\right) }^{i\left( k\right)
}=C_{j\left( c\right) }^{k\left( i\right) },
\end{equation*}%
is called an $h$\textbf{-normal }$N$\textbf{-linear connection of Cartan
type }or a $CD\Gamma (N)$\textbf{-linear connection} on $E^{\ast }=J^{1\ast
}\left( \mathcal{T},M\right) $.
\end{definition}

\begin{remark}
The torsion tensor $\mathbb{T}$ of an $h$-normal $N$-linear connection of
Cartan type $CD\Gamma (N)$ is characterized only by \textbf{eight} adapted
local d-tensors because the torsion components $%
T_{jk}^{i}=H_{jk}^{i}-H_{kj}^{i}$ from the Table (\ref{Table1}) are vanishing%
$.$
\end{remark}

\begin{example}
Taking into account that the Christoffel symbols $\Gamma _{jk}^{i}(x)$ of
the spatial metric $\varphi _{ij}(x)$ are symmetric, it follows that the
Berwald $h$-normal $N_{0}$-linear connection $B\Gamma (N_{0})$ is of Cartan
type.
\end{example}

\begin{theorem}
The following local \textbf{Ricci identities} for a $CD\Gamma (N)$-linear
connection are true:

\begin{itemize}
\item the $h_{\mathcal{T}}$-Ricci identities:\bigskip

$\medskip X_{/b/c}^{a}-X_{/c/b}^{a}=X^{f}\chi
_{fbc}^{a}-X^{a}|_{(f)}^{(r)}R_{(r)bc}^{(f)},$

$\medskip
X_{/b|k}^{a}-X_{|k/b}^{a}=-X_{|r}^{a}T_{bk}^{r}-X^{a}|_{(f)}^{(r)}R_{(r)bk}^{(f)}, 
$

$\medskip X_{|j|k}^{a}-X_{|k|j}^{a}=-X^{a}|_{(f)}^{(r)}R_{(r)jk}^{(f)},$

$\medskip
X_{/b}^{a}|_{(c)}^{(k)}-X^{a}|_{(c)/b}^{(k)}=-X^{a}|_{(f)}^{(r)}P_{(r)b(c)}^{(f)\;\;(k)}, 
$

$\medskip
X_{|j}^{a}|_{(c)}^{(k)}-X^{a}|_{(c)|j}^{(k)}=-X_{|r}^{a}C_{j(c)}^{r(k)}-X^{a}|_{(f)}^{(r)}P_{(r)j(c)}^{(f)\;(k)}, 
$

$\bigskip
X^{a}|_{(b)}^{(j)}|_{(c)}^{(k)}-X^{a}|_{(c)}^{(k)}|_{(b)}^{(j)}=-X^{a}|_{(f)}^{(r)}S_{(r)(b)(c)}^{(f)(j)(k)}; 
$

\item the $h_{M}$-Ricci identities:$\bigskip$

$\medskip
X_{/b/c}^{i}-X_{/c/b}^{i}=X^{r}R_{rbc}^{i}-X^{i}|_{(f)}^{(r)}R_{(r)bc}^{(f)}, 
$

$\medskip
X_{/b|k}^{i}-X_{|k/b}^{i}=X^{r}R_{rbk}^{i}-X_{|r}^{i}T_{bk}^{r}-X^{i}|_{(f)}^{(r)}R_{(r)bk}^{(f)}, 
$

$\medskip
X_{|j|k}^{i}-X_{|k|j}^{i}=X^{r}R_{rjk}^{i}-X^{i}|_{(f)}^{(r)}R_{(r)jk}^{(f)}, 
$

$\medskip
X_{/b}^{i}|_{(c)}^{(k)}-X^{i}|_{(c)/b}^{(k)}=X^{r}P_{rb(c)}^{i\;%
\;(k)}-X^{i}|_{(f)}^{(r)}P_{(r)b(c)}^{(f)\;\;(k)},$

$\medskip X_{|j}^{i}|_{(c)}^{(k)}-X^{i}|_{(c)|j}^{(k)}=X^{r}P_{rj(c)}^{i\;\
(k)}-X_{|r}^{i}C_{j(c)}^{r(k)}-X^{i}|_{(f)}^{(r)}P_{(r)j(c)}^{(f)\;(k)},$

$%
X^{i}|_{(b)}^{(j)}|_{(c)}^{(k)}-X^{i}|_{(c)}^{(k)}|_{(b)}^{(j)}=X^{r}S_{r(b)(c)}^{i(j)(k)}-X^{i}|_{(f)}^{(r)}S_{(r)(b)(c)}^{(f)(j)(k)};\bigskip
$

\item the $v$-Ricci identities:$\bigskip$

$\medskip
X_{(i)/b/c}^{(a)}-X_{(i)/c/b}^{(a)}=X_{(r)}^{(a)}R_{ibc}^{r}-X_{(i)}^{(f)}%
\chi _{fbc}^{a}-X_{(i)}^{(a)}|_{(f)}^{(r)}R_{(r)bc}^{(f)},$

$\medskip
X_{(i)/b|k}^{(a)}-X_{(i)|k/b}^{(a)}=X_{(r)}^{(a)}R_{ibk}^{r}-X_{(i)|r}^{(a)}T_{bk}^{r}-X_{(i)}^{(a)}|_{(f)}^{(r)}R_{(r)bk}^{(f)}, 
$

$\medskip
X_{(i)|j|k}^{(a)}-X_{(i)|k|j}^{(a)}=X_{(r)}^{(a)}R_{ijk}^{r}-X_{(i)}^{(a)}|_{(f)}^{(r)}R_{(r)jk}^{(f)}, 
$

$\medskip
X_{(i)/b}^{(a)}|_{(c)}^{(k)}-X_{(i)}^{(a)}|_{(c)/b}^{(k)}=X_{(r)}^{(a)}P_{ib(c)}^{r\;\;(k)}-X_{(i)}^{(a)}|_{(f)}^{(r)}P_{(r)b(c)}^{(f)\;\;(k)}, 
$

$\medskip
X_{(i)|j}^{(a)}|_{(c)}^{(k)}-X_{(i)}^{(a)}|_{(c)|j}^{(k)}=X_{(r)}^{(a)}P_{ij(c)}^{r\;\ (k)}-X_{(i)|r}^{(a)}C_{j(c)}^{r(k)}-X_{(i)}^{(a)}|_{(f)}^{(r)}P_{(r)j(c),}^{(f)\;(k)} 
$

$\bigskip
X_{(i)}^{(a)}|_{(b)}^{(j)}|_{(c)}^{(k)}-X_{(i)}^{(a)}|_{(c)}^{(k)}|_{(b)}^{(j)}=X_{(r)}^{(a)}S_{i(b)(c)}^{r(j)(k)}-X_{(i)}^{(a)}|_{(f)}^{(r)}S_{(r)(b)(c)}^{(f)(j)(k)},%
\newline
$where%
\begin{equation*}
{X=X^{a}{\dfrac{\delta }{\delta t^{a}}}+X^{i}{\dfrac{\delta }{\delta x^{i}}}%
+X_{(i)}^{(a)}{\dfrac{\partial }{\partial p_{i}^{a}}}}
\end{equation*}%
is an arbitrary d-vector field on the dual $1$-jet space $E^{\ast }=J^{1\ast
}(\mathcal{T},M)$.
\end{itemize}
\end{theorem}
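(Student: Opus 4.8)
The plan is to derive each Ricci identity by directly computing the commutator of the two relevant covariant derivatives acting on the three types of scalar components $X^a$, $X^i$, $X^{(a)}_{(i)}$ of an arbitrary d-vector field $X$, and then recognizing the coefficients that appear as exactly the torsion and curvature d-tensors already catalogued in Tables (\ref{Table1}) and (\ref{Table2}). The conceptual origin of these identities is the standard formula relating the curvature operator to iterated covariant derivatives, namely
\begin{equation*}
D_UD_VX-D_VD_UX-D_{[U,V]}X=\mathbb{R}(U,V)X,
\end{equation*}
evaluated on pairs $U,V$ taken from the adapted basis $\{\delta/\delta t^a,\ \delta/\delta x^i,\ \partial/\partial p^a_i\}$. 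Since there are three types of basis directions ($h_{\mathcal{T}}$, $h_M$, $v$) and we work with each of the three component blocks of $X$, the full statement is simply the exhaustive list of these evaluations; each line of the theorem corresponds to one choice of the pair $(U,V)$ applied to one component block.

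First I would fix the component block, say $X^a$, and compute $X^a_{/b/c}-X^a_{/c/b}$ by substituting the definition of the $\mathcal{T}$-horizontal derivative from Remark~2.6 twice and expanding. The second-order terms split into three groups: those producing the curvature combination $\delta A/\delta t-\delta A/\delta t+AA-AA$ (which for the $X^a$-block gives $\chi^d_{fbc}$ since the temporal coefficient is $\chi^a_{bc}$), those carrying the noncommutativity of the frame vectors $[\delta/\delta t^b,\delta/\delta t^c]$ (which generates the vertical term $R^{(f)}_{(r)bc}$ acting through $X^a|^{(r)}_{(f)}$), and those torsion terms $T^r_{bc}$ which vanish here because $A^a_{bc}=\chi^a_{bc}$ is symmetric. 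The key computational input is the bracket relations among the adapted basis vectors, whose structure functions are precisely the torsion and curvature d-tensors $R^{(f)}_{(r)bc}$, $R^{(f)}_{(r)bj}$, $R^{(f)}_{(r)ij}$, $P$ and $S$ recorded in Table (\ref{Table1}); these come directly from the defining formulas for $\delta/\delta t^a$, $\delta/\delta x^i$ and the curvatures of the nonlinear connection $N$.

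I would then repeat this mechanical expansion systematically: eighteen commutators in total (six direction-pairs $\times$ three component blocks), exploiting that the $X^a$ and $X^i$ blocks transform by the temporal coefficient $\chi$ and the horizontal coefficient $A,H,C$ respectively, while the $X^{(a)}_{(i)}$ block carries \emph{both} a lower-index $i$ acted on by $A,H,C$ and an upper index $a$ acted on by $\chi$ — which is why the $v$-Ricci identities carry the extra $-X^{(f)}_{(i)}\chi^a_{fbc}$ style terms absent from the horizontal blocks. Throughout I would invoke the reduction identities (\ref{2.22}) from Theorem~3.2 to rewrite the mixed-index coefficients $A^{(a)(j)}_{(i)(b)c}$, $H^{(a)(j)}_{(i)(b)k}$, $C^{(a)(j)(k)}_{(i)(b)(c)}$ in terms of the four effective components, and the Cartan-type symmetry ($H^i_{jk}=H^i_{kj}$, $C$ symmetric) to kill the $T^r_{jk}$ torsion terms in the $|j|k$ commutators, matching the statement.

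The main obstacle is not conceptual but bookkeeping: one must keep scrupulous track of index placement and of which coefficient ($\chi$, $A$, $H$, or $C$) acts on which slot when differentiating a component that already carries mixed covariant/contravariant and temporal/spatial indices, and one must correctly separate the genuine curvature contribution from the bracket-induced vertical term — it is easy to double-count or drop the $[\,\cdot\,,\cdot\,]$ correction. The cleanest way to control this is to compute each commutator abstractly via the operator identity $D_UD_V-D_VD_U=D_{[U,V]}+\mathbb{R}(U,V)$, read off $[U,V]$ from the frame brackets, and then specialize the already-established curvature components from Table (\ref{Table2}); this reduces the proof to citing the torsion/curvature tables plus the frame-bracket computation rather than re-deriving everything by brute expansion.
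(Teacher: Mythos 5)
Your proposal is correct and takes essentially the same route as the paper: the authors likewise work on the adapted frame, combine the operator identity for $\mathbb{R}(U,V)$ with the frame brackets $[Y_B,Y_C]=R^F_{BC}Y_F$ and the torsion relation to obtain the unified index formula $X_{:B:C}^{A}-X_{:C:B}^{A}=X^{F}\mathbb{R}_{FBC}^{A}-X_{:F}^{A}\mathbb{T}_{BC}^{F}$, and then specialize the indices $A,B,C$ over $\left\{ a,i,{_{(i)}^{(a)}}\right\}$ using the torsion and curvature tables and the Cartan-type symmetries. Your ``cleanest way'' (abstract commutator identity plus frame brackets plus the tables) is exactly the paper's proof, differing only in that the paper states the specialization once in unified indices rather than pair by pair.
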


\begin{proof}
Let $(Y_{A})$ and $(\omega ^{A})$, where $A\in \left\{ a,i,{_{\left(
i\right) }^{\left( a\right) }}\right\} $, be on $E^{\ast }=J^{1\ast }(%
\mathcal{T},M)$ the dual bases adapted to the nonlinear connection $N$, and
let $X=X^{F}Y_{F}$ be a d-vector field on $E^{\ast }$. In this context,
using the following true equalities (applied for a $CD\Gamma (N)$-linear
connection $D$):\medskip

\begin{enumerate}
\item $D_{Y_{C}}Y_{B}=\Gamma _{BC}^{F}Y_{F},\medskip $

\item $[Y_{B},Y_{C}]=R_{BC}^{F}Y_{F},\medskip $

\item $\mathbb{T}(Y_{C},Y_{B})=\mathbb{T}_{BC}^{F}Y_{F}=\{\Gamma
_{BC}^{F}-\Gamma _{CB}^{F}-R_{CB}^{F}\}Y_{F},\medskip $

\item $\mathbb{R}(Y_{C},Y_{B})Y_{A}=\mathbb{R}_{ABC}^{F}Y_{F},\medskip $

\item $D_{Y_{C}}\omega ^{B}=-\Gamma _{FC}^{B}\omega ^{F},\medskip $

\item $[\mathbb{R}(Y_{C},Y_{B})X]\otimes \omega ^{B}\otimes \omega
^{C}=\{D_{Y_{C}}D_{Y_{B}}X-\medskip $

$-D_{Y_{B}}D_{Y_{C}}X-D_{[Y_{C},Y_{B}]}X\}\otimes \omega ^{B}\otimes \omega
^{C},\medskip$
\end{enumerate}

by a direct calculation, we find that%
\begin{equation}
X_{:B:C}^{A}-X_{:C:B}^{A}=X^{F}\mathbb{R}_{FBC}^{A}-X_{:F}^{A}\mathbb{T}%
_{BC}^{F},  \label{ric}
\end{equation}%
where \textquotedblright $_{:G}$\textquotedblright\ represents one from the
local covariant derivatives \textquotedblright $_{/b}$\textquotedblright ,
\textquotedblright $_{|j}$\textquotedblright\ or \textquotedblright $%
|_{(b)}^{(j)}$\textquotedblright\ produced by the $h$-normal $N$-linear
connection of Cartan type $CD\Gamma (N)$.

Taking into account in (\ref{ric}) that the indices $A,B,C,\ldots $ belong
to the set%
\begin{equation*}
\left\{ a,i,{_{\left( i\right) }^{\left( a\right) }}\right\} ,
\end{equation*}%
and using the particular features of an $h$-normal $N$-linear connection of
Cartan type $CD\Gamma (N)$ (i.e., the torsion d-components $T_{jk}^{i}$ are
zero; we have the curvature relations from the Table (\ref{Table2})), by
complicated computations, we find what we were looking for (see also the
Table (\ref{Table1})).
\end{proof}

In order to find an interesting application of the preceding Ricci
identities, let us consider the \textit{canonical Liouville-Hamilton
d-tensor field of polymomenta }on $E^{\ast }=J^{1\ast }(T,M)$, which is
given by%
\begin{equation*}
\mathbb{C}^{\ast }{=p_{i}^{a}{\dfrac{\partial }{\partial p_{i}^{a}}}}.
\end{equation*}%
In this context, for an $h$-normal $N$-linear connection of Cartan type $%
CD\Gamma (N)$, we can construct the \textit{non-metrical deflection d-tensors%
}, setting 
\begin{equation*}
\Delta _{(i)b}^{(a)}={p_{i/b}^{a}},\quad \Delta _{(i)j}^{(a)}={p_{i|j}^{a}}%
,\quad \vartheta _{(i)(b)}^{(a)(j)}={p_{i}^{a}}|_{(b)}^{(j)},
\end{equation*}%
where "$_{/b}$", "$_{|j}$" and "$|_{(b)}^{(j)}$" are the local covariant
derivatives produced by $CD\Gamma (N)$.

By direct local computations, we deduce that the non-metrical deflection
d-tensors of $CD\Gamma (N)$ have the expressions:%
\begin{equation*}
\begin{array}{c}
\medskip \Delta _{(i)b}^{(a)}=-\underset{1}{N}\overset{\left( a\right) }{%
_{\left( i\right) b}}-A_{ib}^{r}p_{r}^{a}+\chi _{fb}^{a}p_{i}^{f},\quad
\Delta _{(i)j}^{(a)}=-\underset{2}{N}\overset{\left( a\right) }{_{\left(
i\right) j}}-H_{ij}^{r}p_{r}^{a}, \\ 
\vartheta _{(i)(b)}^{(a)(j)}=\delta _{b}^{a}\delta
_{i}^{j}-C_{i(b)}^{r(j)}p_{r}^{a}.%
\end{array}%
\end{equation*}

Applying now the preceding $(v)$-set of Ricci identities (attached to an $h$%
-normal $N$-linear connection of Cartan type) to the components of the
canonical Liouville-Hamilton d-vector field\textit{\ }of polymomenta, we get

\begin{corollary}
The following the \textbf{deflection d-tensor identities}, associated to an $%
h$-normal N-linear connection of Cartan type, are true: 
\begin{equation}
\left\{ 
\begin{array}{l}
\medskip \Delta _{(i)b/c}^{(a)}-\Delta
_{(i)c/b}^{(a)}=p_{r}^{a}R_{ibc}^{r}-p_{i}^{f}\chi _{fbc}^{a}-\vartheta
_{(i)(f)}^{(a)(r)}R_{(r)bc}^{(f)} \\ 
\medskip \Delta _{(i)b|k}^{(a)}-\Delta
_{(i)k/b}^{(a)}=p_{r}^{a}R_{ibk}^{r}-\Delta
_{(i)r}^{(a)}T_{bk}^{r}-\vartheta _{(i)(f)}^{(a)(r)}R_{(r)bk}^{(f)} \\ 
\medskip \Delta _{(i)j|k}^{(a)}-\Delta
_{(i)k|j}^{(a)}=p_{r}^{a}R_{ijk}^{r}-\vartheta
_{(i)(f)}^{(a)(r)}R_{(r)jk}^{(f)} \\ 
\medskip \Delta _{(i)b}^{(a)}|_{(c)}^{(k)}-\vartheta
_{(i)(c)/b}^{(a)(k)}=p_{r}^{a}P_{ib(c)}^{r\;\;(k)}-\vartheta
_{(i)(f)}^{(a)(r)}P_{(r)b(c)}^{(f)\;\;(k)} \\ 
\medskip \Delta _{(i)j}^{(a)}|_{(c)}^{(k)}-\vartheta
_{(i)(c)|j}^{(a)(k)}=p_{r}^{a}P_{ij(c)}^{r\;\ (k)}-\Delta
_{(i)r}^{(a)}C_{j(c)}^{r(k)}-\vartheta
_{(i)(f)}^{(a)(r)}P_{(r)j(c)}^{(f)\;(k)} \\ 
\medskip \vartheta _{(i)(b)}^{(a)(j)}|_{(c)}^{(k)}-\vartheta
_{(i)(c)}^{(a)(k)}|_{(b)}^{(j)}=p_{r}^{a}S_{i(b)(c)}^{r(j)(k)}-\vartheta
_{(i)(f)}^{(a)(r)}S_{(r)(b)(c)}^{(f)(j)(k)}.%
\end{array}%
\right.  \label{defl_ID_h-normal_Cartan}
\end{equation}
\end{corollary}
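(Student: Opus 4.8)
The plan is to obtain these identities as a direct specialization of the $v$-set of Ricci identities established in the preceding theorem, applied to the single d-vector field $X=\mathbb{C}^{\ast }$. First I would record that, for the canonical Liouville-Hamilton d-tensor $\mathbb{C}^{\ast }=p_{i}^{a}\dfrac{\partial }{\partial p_{i}^{a}}$, the adapted components are $X^{a}=0$, $X^{i}=0$ and $X_{(i)}^{(a)}=p_{i}^{a}$. A structural observation that makes this specialization clean is that an $N$-linear connection satisfies $Dv=0$; consequently the local covariant derivatives of the purely vertical component $X_{(i)}^{(a)}$ involve only vertical components again, as is visible from the $(v)$, $(h_{\mathcal{T}})$ and $(h_{M})$ derivative formulas recorded earlier for an arbitrary d-vector field. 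Therefore the vanishing of $X^{a}$ and $X^{i}$ entails no loss of terms: the $v$-Ricci identities are closed in the vertical sector and apply verbatim to $\mathbb{C}^{\ast }$.

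Next I would substitute $X_{(i)}^{(a)}=p_{i}^{a}$ throughout the six $v$-Ricci identities and read off each first-order covariant derivative through the defining relations of the deflection d-tensors, namely $\Delta _{(i)b}^{(a)}=p_{i/b}^{a}$, $\Delta _{(i)j}^{(a)}=p_{i|j}^{a}$ and $\vartheta _{(i)(b)}^{(a)(j)}=p_{i}^{a}|_{(b)}^{(j)}$. Here one must handle the iterated derivatives with care: in each commutator the inner derivative is performed first and produces one of the deflection d-tensors, which is itself a d-tensor, so the outer derivative is then well-defined. Thus $p_{i/b}^{a}|_{(c)}^{(k)}$ is interpreted as $\Delta _{(i)b}^{(a)}|_{(c)}^{(k)}$, while $p_{i}^{a}|_{(c)/b}^{(k)}$ becomes $\vartheta _{(i)(c)/b}^{(a)(k)}$, and analogously for the mixed horizontal commutators. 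Correspondingly, the right-hand-side substitutions are $X_{(r)}^{(a)}\mapsto p_{r}^{a}$, $X_{(i)}^{(f)}\mapsto p_{i}^{f}$, $X_{(i)|r}^{(a)}\mapsto \Delta _{(i)r}^{(a)}$ and $X_{(i)}^{(a)}|_{(f)}^{(r)}\mapsto \vartheta _{(i)(f)}^{(a)(r)}$.

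Carrying out this replacement line by line reproduces exactly the six identities in (\ref{defl_ID_h-normal_Cartan}). The Cartan-type hypothesis is used implicitly, since the $v$-Ricci identities were themselves derived for a $CD\Gamma (N)$-linear connection (where $T_{jk}^{i}=0$); this is why only the surviving torsion d-tensor $T_{bk}^{r}$ appears, as in the second identity. The main, and essentially the only, obstacle is notational bookkeeping: matching the index patterns $(a)(k)$, $(f)(r)$, and so on, together with the correct ordering of the iterated covariant derivatives, against the definitions of $\Delta $ and $\vartheta $. No new analytic input is required beyond the already-established Ricci identities and the explicit expressions of the non-metrical deflection d-tensors computed just above the statement.
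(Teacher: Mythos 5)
Your proposal is correct and is precisely the paper's own argument: the paper obtains the corollary by applying the $(v)$-set of Ricci identities for a $CD\Gamma(N)$-linear connection to the components $X^{a}=0$, $X^{i}=0$, $X_{(i)}^{(a)}=p_{i}^{a}$ of $\mathbb{C}^{\ast}$ and rewriting the resulting covariant derivatives via the definitions of $\Delta_{(i)b}^{(a)}$, $\Delta_{(i)j}^{(a)}$ and $\vartheta_{(i)(b)}^{(a)(j)}$. Your additional remarks (closure of the vertical sector, interpretation of the iterated derivatives, and the implicit use of the Cartan-type hypothesis through the already-established Ricci identities) only make explicit what the paper leaves unstated.
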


\begin{remark}
The deflection d-tensor identities (\ref{defl_ID_h-normal_Cartan}) will be
used in the near future for the construction of the \textbf{geometrical
Maxwell equations} that will govern the abstract multi-time geometrical
"electromagnetism" produced by a quadratic Hamiltonian depending on
polymomenta (this is our work in progress).
\end{remark}

\section{The local Bianchi identities of the $CD\Gamma (N)$-connections on
the dual jet bundle $J^{1\ast }(T,M)$}

From the general theory of linear connections on a vector bundle, one knows
that the torsions $\mathbb{T}$ and curvature $\mathbb{R}$ of a connection $D$
on the dual $1$-jet space $E^{\ast }=J^{1\ast }(T,M)$ are not independent.
In other words, they are interrelated by the following general \textit{%
Bianchi identities} (for any $X,Y,Z,U\in \mathcal{X}\left( E^{\ast }\right) $%
):%
\begin{equation*}
\begin{array}{c}
\sum\limits_{\left\{ X,Y,Z\right\} }\left\{ \left( D_{X}\mathbb{T}\right)
\left( Y,Z\right) -\mathbb{R}(X,Y)Z+\mathbb{T}\left( \mathbb{T}%
(X,Y),Z\right) \right\} =0,\medskip \\ 
\sum\limits_{\left\{ X,Y,Z\right\} }\left( D_{X}\mathbb{R}\right) \left(
Y,Z,U\right) +\mathbb{R}\left( \mathbb{T}(X,Y),Z\right) U=0,%
\end{array}%
\end{equation*}%
where $\Sigma _{\left\{ X,Y,Z\right\} }$ means a cyclic sum. Obviously,
working with a $CD\Gamma (N)$-linear connection and the local adapted basis
of d-vector fields $\left( X_{A}\right) \subset \mathcal{X}\left( E^{\ast
}\right) $ (associated to the given nonlinear connection $N$ on $E^{\ast }$%
), the above Bianchi identities are locally described by the equalities:%
\begin{equation}
\begin{array}{c}
\sum\limits_{\left\{ A,B,C\right\} }\left\{ \mathbb{R}_{ABC}^{F}-\mathbb{T}%
_{AB:C}^{F}-\mathbb{T}_{AB}^{G}\mathbb{T}_{CG}^{F}\right\} =0,\medskip \\ 
\sum\limits_{\left\{ A,B,C\right\} }\left\{ \mathbb{R}_{DAB:C}^{F}+\mathbb{T}%
_{AB}^{G}\mathbb{R}_{DCG}^{F}\right\} =0,%
\end{array}
\label{Bianchi_local}
\end{equation}%
where $\mathbb{R}(X_{A},X_{B})X_{C}=\mathbb{R}_{CBA}^{D}X_{D}$, $\mathbb{T}%
(X_{A},X_{B})=\mathbb{T}_{BA}^{D}X_{D},$ and \textquotedblright $_{:C}$%
\textquotedblright\ represents one from the local covariant derivatives
\textquotedblright $_{/a}$\textquotedblright , \textquotedblright $_{|i}$%
\textquotedblright\ or \textquotedblright $|_{(a)}^{(i)}$\textquotedblright\
of the $CD\Gamma (N)$-linear connection $D$ (for similar details, see the
works \cite{Miro+Anas}, \cite{Miro+Hrim+Shim+Saba} and \cite{Ne9}).
Consequently, we find:

\begin{theorem}
The following \textbf{thirty} effective local \textbf{Bianchi identities}
for an $h$-normal $N$-linear connection of Cartan type $CD\Gamma (N)$ are
true on the dual $1$-jet space $E^{\ast }=J^{1\ast }(\mathcal{T},M)$:$%
\medskip $

\begin{itemize}
\item the first set:$\medskip $

\begin{enumerate}
\item[\emph{1.}] $\sum_{\{a,b,c\}}\chi _{abc}^{d}=0,\medskip $

\item[\emph{2.}] $\mathcal{A}_{\{a,b\}}\left\{
T_{ar}^{l}T_{bk}^{r}-T_{ak/b}^{l}\right\}
=R_{kab}^{l}-C_{k(f)}^{l(r)}R_{(r)ab}^{(f)},\medskip$

\item[\emph{3.}] $\mathcal{A}_{\{j,k\}}\left\{
C_{k(f)}^{l(r)}R_{(r)aj}^{(f)}+R_{jak}^{l}+T_{aj|k}^{l}\right\} =0,\medskip$

\item[\emph{4.}] $\sum_{\{i,j,k\}}\left\{
C_{k(f)}^{l(r)}R_{(r)ij}^{(f)}-R_{ijk}^{l}\right\} =0,\medskip$
\end{enumerate}

\item the second set:\medskip

\begin{enumerate}
\item[\emph{5.}] $\sum_{\{a,b,c\}}\left\{
R_{(l)ab/c}^{(d)}+P_{(l)c(f)}^{(d)\;\;(r)}R_{(r)ab}^{(f)}\right\}
=0,\medskip $

\item[\emph{6.}] $\mathcal{A}_{\{a,b\}}\left\{
R_{(l)ak/b}^{(d)}+P_{(l)b(f)}^{(d)\;%
\;(r)}R_{(r)ak}^{(f)}+R_{(l)br}^{(d)}T_{ak}^{r}\right\} =\medskip$

$=R_{(l)ab|k}^{(d)}+P_{(l)k(f)}^{(d)\;\;(r)}R_{(r)ab}^{(f)},\medskip$

\item[\emph{7.}] $\mathcal{A}_{\{j,k\}}\left\{
R_{(l)aj|k}^{(d)}+P_{(l)k(f)}^{(d)\;\
(r)}R_{(r)aj}^{(f)}+R_{(l)kr}^{(d)}T_{aj}^{r}\right\} =\medskip$

$=-R_{(l)jk/a}^{(d)}-P_{(l)a(f)}^{(d)\;\;(r)}R_{(r)jk}^{(f)},\medskip$

\item[\emph{8.}] $\sum_{\{i,j,k\}}\left\{
R_{(l)ij|k}^{(d)}+P_{(l)k(f)}^{(d)\;\ (r)}R_{(r)ij}^{(f)}\right\}
=0,\medskip $
\end{enumerate}

\item the third set:\medskip

\begin{enumerate}
\item[\emph{9.}] $%
T_{ak}^{l}|_{(e)}^{(p)}-C_{r(e)}^{l(p)}T_{ak}^{r}+P_{ka(e)}^{l\;%
\;(p)}+C_{k(e)/a}^{l(p)}-C_{k(f)}^{l(r)}P_{(r)a(e)}^{(f)\;%
\;(p)}+C_{k(e)}^{r(p)}T_{ar}^{l}=0,\medskip$

\item[\emph{10.}] $\mathcal{A}_{\{j,k\}}\left\{
C_{j(e)|k}^{l(p)}+C_{k(f)}^{l(r)}P_{(r)j(e)}^{(f)\;\ (p)}+P_{jk(e)}^{l\;\
(p)}\right\} =0,\medskip$
\end{enumerate}

\item the fourth set:\medskip

\begin{enumerate}
\item[\emph{11.}] $\mathcal{A}_{\{a,b\}}\left\{
P_{(l)a(e)/b}^{(d)\;\;(p)}+P_{(l)b(f)}^{(d)\;\;(r)}P_{(r)a(e)}^{(f)\;\;(p)}%
\right\} =\medskip$

$%
=R_{(l)ab}^{(d)}|_{(e)}^{(p)}+R_{(l)(e)ab}^{(d)(p)}+S_{(l)(e)(f)}^{(d)(p)(r)}R_{(r)ab}^{(f)},\medskip 
$

\item[\emph{12.}] $\mathcal{A}_{\{a,k\}}\left\{
P_{(l)a(e)|k}^{(d)\;\;(p)}+P_{(l)k(f)}^{(d)\;\;(r)}P_{(r)a(e)}^{(f)\;\;(p)}%
\right\} =\medskip $

$%
=R_{(l)ak}^{(d)}|_{(e)}^{(p)}+R_{(l)(e)ak}^{(d)(p)}+S_{(l)(e)(f)}^{(d)(p)(r)}R_{(r)ak}^{(f)}+R_{(l)ar}^{(d)}C_{k(e)}^{r(p)}-T_{ak}^{r}P_{(l)r(e)}^{(d)\;(p)}\medskip 
$

\item[\emph{13.}] $\mathcal{A}_{\{j,k\}}\left\{
P_{(l)j(e)|k}^{(d)\;(p)}+P_{(l)k(f)}^{(d)\;(r)}P_{(r)j(e)}^{(f)%
\;(p)}+R_{(l)kr}^{(d)}C_{j(e)}^{r(p)}\right\} =\medskip$

$%
=R_{(l)jk}^{(d)}|_{(e)}^{(p)}+R_{(l)(e)jk}^{(d)(p)}+S_{(l)(e)(f)}^{(d)(p)(r)}R_{(r)jk}^{(f)},\medskip 
$
\end{enumerate}

\item the fifth set:\medskip

\begin{enumerate}
\item[\emph{14.}] $\mathcal{A}_{\left\{ {_{(b)}^{(j)}},{_{(c)}^{(k)}}%
\right\} }\left\{
C_{i(b)}^{l(j)}|_{(c)}^{(k)}+C_{i(c)}^{r(k)}C_{r(b)}^{l(j)}\right\}
=S_{i(b)(c)}^{l(j)(k)}-C_{i(f)}^{l(r)}S_{(r)(b)(c)}^{(f)(j)(k)},\medskip$
\end{enumerate}

\item the sixth set:\medskip

\begin{enumerate}
\item[\emph{15.}] $\mathcal{A}_{\left\{ {_{(b)}^{(j)}},{_{(c)}^{(k)}}%
\right\} }\left\{
P_{(l)a(b)}^{(d)\;\;(j)}|_{(c)}^{(k)}+P_{(r)a(b)}^{(f)\;%
\;(j)}S_{(l)(c)(f)}^{(d)(k)(r)}-P_{(l)(b)a(c)}^{(d)(j)\;\;(k)}\right\}
=\medskip$

$=-S_{(l)(b)(c)/a}^{(d)(j)(k)}-S_{(r)(b)(c)}^{(f)(j)(k)}P_{(l)a(f)}^{(d)\;%
\;(r)},\medskip$

\item[\emph{16.}] $\mathcal{A}_{\left\{ {_{(b)}^{(j)}},{_{(c)}^{(k)}}%
\right\} }\left\{
P_{(l)i(b)}^{(d)\;(j)}|_{(c)}^{(k)}+P_{(r)i(b)}^{(f)%
\;(j)}S_{(l)(c)(f)}^{(d)(k)(r)}-P_{(l)(b)i(c)}^{(d)(j)\;(k)}-\right.
\medskip $

$\left. -C_{i(b)}^{r(j)}P_{(l)r(c)}^{(d)\;\ (k)}\right\}
=-S_{(l)(b)(c)|i}^{(d)(j)(k)}-S_{(r)(b)(c)}^{(f)(j)(k)}P_{(l)i(f)}^{(d)%
\;(r)},\medskip$
\end{enumerate}

\item the seventh set:\medskip

\begin{enumerate}
\item[\emph{17.}] $\sum_{\left\{ {_{(a)}^{(i)}},{_{(b)}^{(j)}},{_{(c)}^{(k)}}%
\right\} }\left\{
S_{(l)(a)(b)}^{(d)(i)(j)}|_{(c)}^{(k)}+S_{(r)(a)(b)}^{(f)(i)(j)}S_{(l)(c)(f)}^{(d)(k)(r)}+S_{(l)(a)(b)(c)}^{(d)(i)(j)(k)}\right\} =0,\medskip 
$
\end{enumerate}

\item the eight set:\medskip

\begin{enumerate}
\item[\emph{18.}] $\sum_{\{a,b,c\}}\chi _{eab/c}^{d}=0,\medskip$

\item[\emph{19.}] $\chi _{eab|k}^{d}=0,\medskip$

\item[\emph{20.}] $\chi _{eab}^{d}|_{(c)}^{(k)}=0,\medskip$

\item[\emph{21.}] $\sum_{\{a,b,c\}}\left\{
R_{pab/c}^{l}+R_{(r)ab}^{(f)}P_{pc(f)}^{l\;\;(r)}\right\} =0,\medskip$

\item[\emph{22.}] $\mathcal{A}_{\{a,b\}}\left\{
R_{pak/b}^{l}+R_{(r)ak}^{(f)}P_{pb(f)}^{l\;\;(r)}+T_{ak}^{r}R_{pbr}^{l}%
\right\} =R_{pab|k}^{l}+R_{(r)ab}^{(f)}P_{pk(f)}^{l\;\ (r)},\medskip$

\item[\emph{23.}] $\mathcal{A}_{\{j,k\}}\left\{
R_{paj|k}^{l}+R_{(r)aj}^{(f)}P_{pk(f)}^{l\;\;(r)}+T_{aj}^{r}R_{pkr}^{l}%
\right\} =-R_{pjk/a}^{l}-R_{(r)jk}^{(f)}P_{pa(f)}^{l\;\ (r)},\medskip$

\item[\emph{24.}] $\sum_{\{i,j,k\}}\left\{
R_{pij|k}^{l}+R_{(r)ij}^{(f)}P_{pk(f)}^{l\;\ (r)}\right\} =0,\medskip$
\end{enumerate}

\item the nineth set:\medskip

\begin{enumerate}
\item[\emph{25.}] $\mathcal{A}_{\{a,b\}}\left\{
P_{ia(e)/b}^{l\;\;(p)}+P_{(r)a(e)}^{(f)\;\;(p)}P_{ib(f)}^{l\;(r)}\right\}
=R_{iab}^{l}|_{(e)}^{(p)}+R_{(r)ab}^{(f)}S_{i(e)(f)}^{l(p)(r)},\medskip$

\item[\emph{26.}] $\mathcal{A}_{\{a,k\}}\left\{
P_{ia(e)|k}^{l\;\;(p)}+P_{(r)a(e)}^{(f)\;\;(p)}P_{ik(f)}^{l\;\;(r)}\right\}
=\medskip$

$%
=R_{iak}^{l}|_{(e)}^{(p)}+R_{(r)ak}^{(f)}S_{i(e)(f)}^{l(p)(r)}+C_{k(e)}^{r(p)}R_{iar}^{l}-T_{ak}^{r}P_{ir(e)}^{l\;\;(p)},\medskip 
$

\item[\emph{27.}] $\mathcal{A}_{\{j,k\}}\left\{
P_{ij(e)|k}^{l\;\;(p)}+P_{(r)j(e)}^{(f)\;\;(p)}P_{ik(f)}^{l\;%
\;(r)}+C_{j(e)}^{r(p)}R_{ikr}^{l}\right\} =\medskip$

$=R_{ijk}^{l}|_{(e)}^{(p)}+R_{(r)jk}^{(f)}S_{i(e)(f)}^{l(p)(r)},\medskip$
\end{enumerate}

\item the tenth set:\medskip

\begin{enumerate}
\item[\emph{28.}] $\mathcal{A}_{\left\{ {_{(b)}^{(j)}},{_{(c)}^{(k)}}%
\right\} }\left\{
P_{pa(b)}^{l\;\;(j)}|_{(c)}^{(k)}+P_{(r)a(b)}^{(f)\;%
\;(j)}S_{p(c)(f)}^{l(k)(r)}\right\} =\medskip$

$=-S_{p(b)(c)/a}^{l(j)(k)}-S_{(r)(b)(c)}^{(f)(j)(k)}P_{pa(f)}^{l\;\;(r)},%
\medskip$

\item[\emph{29.}] $\mathcal{A}_{\left\{ {_{(b)}^{(j)}},{_{(c)}^{(k)}}%
\right\} }\left\{
P_{pi(b)}^{l\;\;(j)}|_{(c)}^{(k)}+P_{(r)i(b)}^{(f)\;%
\;(j)}S_{p(c)(f)}^{l(k)(r)}-C_{i(b)}^{r(j)}P_{pr(c)}^{l\;\;(k)}\right\}
=\medskip$

$=-S_{p(b)(c)|i}^{l(j)(k)}-S_{(r)(b)(c)}^{(f)(j)(k)}P_{pi(f)}^{l\;\;(r)},%
\medskip$
\end{enumerate}

\item the eleventh set:\medskip

\begin{enumerate}
\item[\emph{30.}] $\sum_{\left\{ {_{(a)}^{(i)}},{_{(b)}^{(j)}},{_{(c)}^{(k)}}%
\right\} }\left\{
S_{p(a)(b)}^{l(i)(j)}|_{(c)}^{(k)}+S_{(f)(a)(b)}^{(r)(i)(j)}S_{p(c)(r)}^{l(k)(f)}\right\} =0,\medskip 
$
\end{enumerate}

where, if $\{A,B,C\}$ are indices of type $\left\{ a,i,{_{(i)}^{(a)}}%
\right\} $, then $\sum_{\{A,B,C\}}$ represents a cyclic sum, and $\mathcal{A}%
_{\{A,B\}}$ represents an alternate sum.
\end{itemize}
\end{theorem}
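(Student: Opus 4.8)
The plan is to specialize the two general Bianchi identities recalled above, in their localized form (\ref{Bianchi_local}), to the adapted basis $(X_A)$ of a $CD\Gamma(N)$-connection, and then to run systematically through every admissible assignment of the index types $\{a,\,i,\,{_{(i)}^{(a)}}\}$ to the free output index $F$ (respectively to the pair $D,F$ in the second identity) and to the summed triple $\{A,B,C\}$. Each assignment produces one concrete relation among the torsion and curvature d-tensors, and the content of the theorem is that, after discarding those assignments that collapse to $0=0$, exactly thirty genuinely distinct relations remain, grouped into the eleven sets listed.

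First I would exploit the fact that, for an $N$-linear connection, the curvature operator $\mathbb{R}(X,Y)$ preserves each of the three distributions $\mathcal{H}_\mathcal{T}$, $\mathcal{H}_M$, $\mathcal{V}$, so that in $\mathbb{R}_{ABC}^F$ the type of the acted-upon index $A$ coincides with the type of $F$; by contrast the torsion $\mathbb{T}(X,Y)$ may change type, its nonzero adapted components being exactly those in Table (\ref{Table1}). Treating the first (torsion) identity $\sum_{\{A,B,C\}}\{\mathbb{R}_{ABC}^F-\mathbb{T}_{AB:C}^F-\mathbb{T}_{AB}^G\mathbb{T}_{CG}^F\}=0$, I fix the output type and the distribution of temporal/spatial/vertical slots inside the triple, then substitute the explicit torsion components together with the curvature components from Table (\ref{Table2}). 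Because for a Cartan-type connection the torsion d-tensors $T_{bc}^a$, $T_{bj}^a$, $P_{b(c)}^{a(k)}$ and $T_{jk}^i=H_{jk}^i-H_{kj}^i$ all vanish, the great majority of slot-patterns reduce to $0=0$, and the surviving patterns yield identities 1--17, with the cyclic sum $\sum$ degenerating to the alternating sum $\mathcal{A}$ whenever two of the three slots carry equal type.

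I would then repeat the enumeration for the second (curvature) identity $\sum_{\{A,B,C\}}\{\mathbb{R}_{DAB:C}^F+\mathbb{T}_{AB}^G\mathbb{R}_{DCG}^F\}=0$, now with two free indices $D,F$ of equal type; the cases $D,F$ temporal and $D,F$ spatial produce identities 18--20 and 21--30 respectively. The crucial structural input that keeps the total at thirty is the $h$-normality encoded in the right-hand columns of Table (\ref{Table2}): the vertical curvature d-tensors are not independent but are expressed through the horizontal ones together with Kronecker factors, e.g. $-R_{(l)(a)bc}^{(d)(i)}=\delta_l^i\chi_{abc}^d-\delta_a^d R_{lbc}^i$, and likewise for $P$ and $S$. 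Substituting these into the $D,F$-vertical specializations and stripping off the delta factors shows that they are mere linear combinations of the temporal and spatial identities already obtained, so they contribute nothing new; the same mechanism collapses the $D,F$-temporal identities to the bare curvature relations $\sum_{\{a,b,c\}}\chi_{eab/c}^d=0$, $\chi_{eab|k}^d=0$, $\chi_{eab}^d|_{(c)}^{(k)}=0$ (identities 18--20).

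The main difficulty is organizational rather than conceptual: one must keep exact track of which of the twelve general torsion and eighteen general curvature components are nonzero, implement the cyclic sum $\sum$ versus the alternating sum $\mathcal{A}$ correctly for each slot-pattern, and expand every covariant-derivative term $\mathbb{T}_{AB:C}^F$ and $\mathbb{R}_{DAB:C}^F$ through the $(h_\mathcal{T})$, $(h_M)$ and $(v)$ covariant-derivative formulas produced by $D\Gamma(N)$ without sign or index slips. These are the \textquotedblleft complicated computations\textquotedblright\ alluded to in the proof of the Ricci-identity theorem; once the surviving specializations are identified, each of the thirty lines is obtained simply by collecting the nonzero terms of the corresponding case.
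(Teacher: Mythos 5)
Your proposal is correct and follows essentially the same route as the paper: the authors likewise specialize the localized Bianchi identities (\ref{Bianchi_local}) to the adapted basis of the $CD\Gamma(N)$-connection, substitute the torsion and curvature d-components from the Tables (\ref{Table1}) and (\ref{Table2}), and extract the thirty identities after ``laborious local computations.'' Your explicit organization of that computation --- the first (torsion) identity yielding identities \emph{1}--\emph{17}, the second (curvature) identity with $D,F$ temporal and $D,F$ spatial yielding \emph{18}--\emph{20} and \emph{21}--\emph{30}, and the $D,F$-vertical cases collapsing onto these via the Kronecker-factored relations in the last column of Table (\ref{Table2}) --- is precisely the bookkeeping the paper leaves implicit.
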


\begin{proof}
Taking into account that the indices $A,B,C,D...$ are of type%
\begin{equation*}
\left\{ a,i,{_{\left( i\right) }^{\left( a\right) }}\right\} ,
\end{equation*}%
and the torsion $\mathbb{T}_{AB}^{C}$ and curvature $\mathbb{R}_{ABC}^{D}$
adapted components are given in the Tables (\ref{Table1}) and (\ref{Table2}%
), after laborious local computations, the formulas (\ref{Bianchi_local})
imply the required Bianchi identities.
\end{proof}

\begin{remark}
We point out that, in the particular single-time case%
\begin{equation*}
(\mathcal{T},h)=(\mathbb{R},\delta =1),
\end{equation*}%
the last identity of our each set of local Bianchi identities reduces to one
of the classical eleven Bianchi identities that characterize the $N$\textit{%
-linear connections} in the classical Hamilton geometry on cotangent bundles
(see \emph{\cite{Miro+Hrim+Shim+Saba}}).
\end{remark}

\textbf{Acknowledgements.} The authors of this paper would like to express
their sincere gratitude to Professor Gh. Atanasiu for his suggestions and
useful discussions on this research topic.

Alexandru OAN\u{A} and Mircea NEAGU

University Transilvania of Bra\c{s}ov,

Department of Mathematics - Informatics,

Blvd. Iuliu Maniu, no. 50, Bra\c{s}ov 500091, Romania.

\textit{E-mails}: alexandru.oana@unitbv.ro, mircea.neagu@unitbv.ro

\end{document}